\newcommand{\Qq}{\mathbb{Q}} 
\newcommand{\Zz}{\mathbb{Z}} 
\newcommand{\Cc}{\mathbb{C}} 
\newcommand{\id}{{\rm id}}
\numberwithin{equation}{section}
\theoremstyle{plain}
\newtheorem{theo}{Th\'eor\`eme}[section]
\newtheorem{lemme}[theo]{Lemme}
\newtheorem{coro}[theo]{Corollaire}
\newtheorem{prop}[theo]{Proposition}}
\theoremstyle{remark}
\newtheorem{rk}[theo]{Remarque}
\newenvironment{poliabstract}[1]
{\begin{abstract}}
{\end{abstract}}
\title{Probl\`emes de plongement finis \\ sur les corps non commutatifs}
\author{Angelot Behajaina, Bruno Deschamps et Fran\c{c}ois Legrand}
\email{angelot.behajaina@unicaen.fr}
\address{Normandie Univ., UNICAEN, CNRS, Laboratoire de Math\'ematiques Nicolas Oresme, 14000 Caen, France}
\email{Bruno.Deschamps@univ-lemans.fr}
\address{Normandie Univ., UNICAEN, CNRS, Laboratoire de Math\'ematiques Nicolas Oresme, 14000 Caen, France et D\'epartement de Math\'ematiques, Le Mans Universit\'e, Avenue Olivier Messiaen, 72085 Le Mans cedex 9, France}
\email{francois.legrand@tu-dresden.de}
\address{Institut f\"ur Algebra, Fachrichtung Mathematik, TU Dresden, 01062 Dresden, Germany}
\begin{document}

\maketitle

\vspace{-6mm}

\begin{poliabstract}{R\'esum\'e}
Nous \'etendons la notion de probl\`eme de plongement fini sur les corps commutatifs, une notion centrale de la th\'eorie inverse de Galois, \`a la situation d'un corps quelconque $H$ de dimension finie sur son centre $h$. Nous montrons tout d'abord que r\'esoudre un probl\`eme de plongement fini sur $H$ \'equivaut \`a trouver une solution \`a un certain probl\`eme de plongement fini sur $h$ v\'erifiant une contrainte polynomiale. Nous montrons ensuite que tout probl\`eme de plongement fini scind\'e constant sur le corps de fractions rationnelles $H(t)$ \`a ind\'etermin\'ee centrale $t$ admet une solution, si $h$ est un corps ample. Il s'agit d'un analogue non commutatif d'un r\'esultat profond de Pop. Plus g\'en\'eralement, nous r\'esolvons de tels probl\`emes de plongement sur le corps  de fractions rationnelles $H(t, \sigma)$, o\`u $\sigma$ est un automorphisme de $H$ d'ordre fini. Nos r\'esultats g\'en\'eralisent de pr\'ec\'edents travaux sur le probl\`eme inverse de Galois sur les corps quelconques.
\end{poliabstract}

\section{Introduction} \label{sec:intro}

\subsection{Le probl\`eme inverse de Galois sur les corps quelconques} \label{ssec:intro_1}

La th\'eorie inverse de Galois sur un corps commutatif $k$, dont la premi\`ere question est le {\it{probl\`eme inverse de Galois}}\footnote{qui consiste \`a savoir si, pour tout groupe fini $G$, il existe un corps commutatif $\ell$ galoisien sur $k$ et tel que ${\rm{Gal}}(\ell/k)=G$.}, est un domaine de recherche bien ancr\'e en alg\`ebre et th\'eorie des nombres qui remonte \`a Hilbert et Noether. Nous renvoyons aux articles de survol \cite{DD97b, Deb01a, Deb01b} et aux ouvrages de r\'ef\'erence \cite{Ser92, Vol96, FJ08, MM18} pour un vaste panorama. Pourtant, la th\'eorie inverse de Galois peut \^etre consid\'er\'ee d\`es que l'on dispose d'une notion d'extension galoisienne. Alors qu'il s'agit d'un sujet tr\`es \'etudi\'e dans le cas commutatif, il est \'etonnant que le cas non commutatif n'ait \'et\'e, jusqu'\`a tr\`es r\'ecemment, que tr\`es peu abord\'e, alors qu'une notion naturelle d'extension galoisienne existe dans ce contexte. En effet, d'apr\`es Artin, si $H \subseteq L$ sont deux corps\footnote{Un {\it{corps}} est un anneau non nul dans lequel tout \'el\'ement non nul est inversible. Un corps dont la multiplication est commutative (resp. non commutative) est un {\it{corps commutatif}} (resp. un {\it{corps gauche}}). Dans cet article, pour mieux visualiser la distinction entre corps quelconques et corps commutatifs, nous utilisons des lettres majuscules pour les corps quelconques et des lettres minuscules pour les corps commutatifs.}, l'extension $L/H$ est {\it{galoisienne}} si le sous-corps de $L$ laiss\'e fixe par les $H$-automorphismes de $L$ vaut $H$. Nous renvoyons aux livres \cite{Jac64, Coh95} pour un panorama de la th\'eorie de Galois des corps quelconques.

Le but de cet article est de contribuer au d\'eveloppement de la th\'eorie inverse de Galois sur les corps quelconques, \`a la suite des premiers travaux \cite{DL20, ALP20, Beh21} sur le sujet. Dans \cite{DL20}, Deschamps et Legrand montrent que, si $H$ est un corps de di\-men\-sion finie sur son centre $h$, le probl\`eme inverse de Galois sur $H$ admet une r\'eponse po\-si\-tive (c'est-\`a-dire, pour tout groupe fini $G$, il existe une extension galoisienne $L/H$ de groupe $G$) si et seulement si $h$ satisfait \`a une variante du probl\`eme inverse de Galois faisant intervenir une contrainte polynomiale associ\'ee \`a la {\it{norme r\'eduite}} de $H/h$ (voir \S\ref{ssec:prelim_2} pour un \'enonc\'e plus pr\'ecis). Comme application, ils montrent que le probl\`eme inverse de Galois admet une r\'eponse positive sur le corps de fractions rationnelles $H(t)$ \`a ind\'etermin\'ee centrale $t$ (voir \S\ref{ssec:prelim_3} pour la d\'efinition), si $h$ contient un corps ample\footnote{Rappelons qu'un corps commutatif $k$ est {\it{ample}} si toute $k$-courbe lisse g\'eom\'etriquement irr\'eductible admettant au moins un point $k$-rationnel en admet en fait une infinit\'e. Les corps amples incluent les corps commutatifs alg\'ebriquement clos, les corps commutatifs valu\'es complets $\Qq_p$, $\mathbb{R}$, $\kappa((Y))$, le corps commutatif $\Qq^{\rm{tr}}$ des nombres alg\'ebriques totalement r\'eels, etc. Nous renvoyons \`a l'ouvrage de r\'ef\'erence \cite{Jar11} et aux articles de survol \cite{BSF13, Pop14} pour un vaste panorama des corps amples.}. Si $H$ est commutatif, ce dernier r\'esultat n'est rien d'autre qu'un profond r\'esultat de Pop (voir \cite[Main Theorem A]{Pop96}). Une application du deuxi\`eme r\'esultat de Deschamps et Legrand est ensuite donn\'ee dans \cite{ALP20} o\`u Alon, Legrand et Paran montrent que, si $H$ est un corps de dimension finie sur son centre $h$ et si $h$ contient un corps ample, alors le probl\`eme inverse de Galois admet une r\'eponse positive sur le corps de fractions $H(X)$ de l'anneau des fonctions polynomiales en la variable $X$. Enfin, dans \cite{Beh21}, Behajaina g\'en\'eralise le deuxi\`eme r\'esultat de Deschamps et Legrand. Sans demander que $H$ soit de dimension finie sur son centre $h$, il montre que le probl\`eme inverse de Galois admet une r\'eponse positive sur le corps de fractions rationnelles $H(t, \sigma)$, o\`u $\sigma$ est n'importe quel automorphisme de $H$ d'ordre fini tel que le sous-corps de $h$ laiss\'e fixe par $\sigma$ contienne un corps ample (si $\sigma={\rm{id}}_H$, on a $H(t, \sigma)=H(t)$).

\subsection{Probl\`emes de plongement finis sur les corps commutatifs} \label{ssec:intro_2}

Nous allons plus loin que les articles pr\'ec\'edents, en introduisant les probl\`emes de plongement finis, un sujet central dans le cas commutatif, sur n'importe quel corps de dimension finie sur son centre.

Rappelons bri\`evement ce que sont les probl\`emes de plongement finis dans le cas commutatif (voir, par exemple, \cite[\S16.4]{FJ08} pour plus de d\'etails). Un {\it{probl\`eme de plongement fini}} sur un corps commutatif $k$ est un \'epimorphisme $\alpha : G \rightarrow {\rm{Gal}}(\ell/k)$, o\`u $G$ est un groupe fini et $\ell/k$ une extension galoisienne de corps commutatifs. On dit que $\alpha$ est {\it{scind\'e}} s'il existe un plongement $\alpha' : {\rm{Gal}}(\ell/k) \rightarrow G$ tel que $\alpha \circ \alpha' = {\rm{id}}_{{\rm{Gal}}(\ell/k)}$. Une {\it{solution}} \`a $\alpha$ est un isomorphisme $\beta : {\rm{Gal}}(f/k) \rightarrow G$, o\`u $f$ est un corps commutatif contenant $\ell$ et qui est une extension galoisienne de $k$, tel que $\alpha \circ \beta$ soit l'application de restriction ${\rm{Gal}}(f/k) \rightarrow {\rm{Gal}}(\ell/k)$. Une {\it{solution g\'eom\'etrique}} \`a $\alpha$ est un isomorphisme $\beta : {\rm{Gal}}(e/k(t)) \rightarrow G$, o\`u $e$ est un corps commutatif contenant $\ell$ et qui est une extension galoisienne de $k(t)$, tel que $\alpha \circ \beta$ soit l'application de restriction ${\rm{Gal}}(e/k(t)) \rightarrow {\rm{Gal}}(\ell/k)$. 

La conjecture principale portant sur les probl\`emes de plongement finis dans le cas commutatif a \'et\'e propos\'ee par D\`ebes et Deschamps (voir \cite[\S2.2.1]{DD97b}) : {\it{tout probl\`eme de plongement fini scind\'e $G \rightarrow {\rm{Gal}}(\ell/k)$ sur n'importe quel corps commutatif $k$ poss\`ede une solution g\'eom\'etrique ${\rm{Gal}}(e/k(t)) \rightarrow G$ v\'erifiant $e \cap \overline{k}=\ell$}}. L'int\'er\^et de cette conjecture est qu'elle g\'en\'eralise et unifie plusieurs conjectures en th\'eorie inverse de Galois commutative. D'une part, elle fournit une r\'eponse positive au probl\`eme inverse de Galois sur $\Qq$. D'autre part, elle permet de r\'esoudre la {\it{conjecture de Shafarevich}}, qui affirme que le groupe de Galois absolu de l'extension cyclotomique maximale de $\Qq$ est prolibre et qui est abondamment \'etudi\'ee (voir, par exemple, \cite{Pop96, HS05, Par09, Des15}). A ce jour, la conjecture de D\`ebes et Deschamps n'a \'et\'e d\'emontr\'ee que pour les corps amples (voir \cite[Main Theorem A]{Pop96}) et aucun contre-exemple n'est connu.

\subsection{Contenu de l'article} \label{ssec:intro_3}

Concr\`etement, nous remplissons quatre objectifs.

Premi\`erement, nous \'etendons la terminologie rappel\'ee dans le \S\ref{ssec:intro_2} \`a la situation d'un corps $H$ de dimension finie sur son centre (voir \S\ref{ssec:termi_2} pour plus de d\'etails). Une premi\`ere difficult\'e est que, si $F/H$ et $L/H$ sont deux extensions galoisiennes \`a groupes de Galois finis et telles que $L \subseteq F$, alors il n'est en g\'en\'eral pas vrai que $\sigma(x) \in L$ si $\sigma \in {\rm{Gal}}(F/H)$ et $x \in L$, c'est-\`a-dire il n'y a pas, a priori, d'application de restriction ${\rm{Gal}}(F/H) \rightarrow {\rm{Gal}}(L/H)$ comme dans le cas commutatif. Cependant, une telle application de restriction existe toujours si $H$ est de dimension finie sur son centre (voir \cite[Chapter 3]{Coh95} ou \S\ref{ssec:termi_1}). De plus, pour g\'en\'eraliser la notion de solution g\'eom\'etrique, nous devons montrer au pr\'ealable que, pour une extension galoisienne $L$ d'un corps $H$ de dimension finie sur son centre \`a groupe de Galois fini, l'extension $L(t)/H(t)$ est galoisienne et il existe une application de restriction ${\rm{Gal}}(L(t)/H(t)) \rightarrow {\rm{Gal}}(L/H)$, qui est un isomorphisme (voir \S\ref{ssec:termi_1}).

Deuxi\`emement, si $\alpha : G \rightarrow {\rm{Gal}}(L/H)$ d\'esigne un probl\`eme de plongement fini sur un corps $H$ de dimension finie sur son centre $h$, nous lui associons un probl\`eme de plongement fini $\check{\alpha} : G \rightarrow {\rm{Gal}}(\ell/h)$ sur $h$, o\`u $\ell$ est le centre de $L$ (voir \eqref{def_fep_down}). Nous montrons alors le th\'eor\`eme suivant, dont le cas $L=H$ est le premier r\'esultat de \cite{DL20} rappel\'e dans le \S\ref{ssec:intro_1} :

\begin{theo} \label{thm:main_2}
Soient $\alpha : G \rightarrow {\rm{Gal}}(L/H)$ un probl\`eme de plongement fini sur un corps $H$ de dimension finie sur son centre $h$ et $\mathcal{F}_H \in h[x_1, \dots, x_n]$ la forme polynomiale associ\'ee \`a la norme r\'eduite de $H/h$ relativement au choix d'une $h$-base de $H$. Alors $\alpha$ a une solution ${\rm{Gal}}(F/H) \rightarrow G$ si et seulement si $\check{\alpha}$ a une solution ${\rm{Gal}}(f/h) \rightarrow G$ telle que $\mathcal{F}_H$ n'ait que le z\'ero trivial sur $f$.
\end{theo}

\noindent
Nous renvoyons \`a \eqref{eq:red_norm} pour la d\'efinition de la forme polynomiale $\mathcal{F}_H$ et au th\'eor\`eme \ref{thm:DL_fep} pour un \'enonc\'e plus pr\'ecis, qui fournit une correspondance explicite entre les solutions \`a $\alpha$ et celles \`a $\check{\alpha}$ v\'erifiant la contrainte polynomiale ci-dessus.

Troisi\`emement, nous \'etablissons un analogue non commutatif du r\'esultat de Pop r\'esolvant la conjecture de D\`ebes et Deschamps sur les corps amples :

\begin{theo} \label{thm:main_1}
Soit $H$ un corps de dimension finie sur son centre $h$. Tout probl\`eme de plongement fini scind\'e sur $H$ admet une solution g\'eom\'etrique, si $h$ est un corps ample.
\end{theo}

Etant donn\'es un probl\`eme de plongement fini scind\'e $\alpha : G \rightarrow {\rm{Gal}}(L/H)$ sur $H$ et un automorphisme $\sigma$ de $H$ d'ordre fini, nous donnons en fait, sous l'hypoth\`ese $h$ ample, des conditions suffisantes pour que $\alpha$ acqui\`ere une solution sur $H(t, \sigma)$. Nous renvoyons au th\'eor\`eme  \ref{thm:3} pour notre r\'esultat pr\'ecis. Ce dernier r\'esultat g\'en\'eralise, d'une part, le th\'eor\`eme \ref{thm:main_1} et permet, d'autre part, de r\'eobtenir le r\'esultat de Behajaina mentionn\'e dans le \S\ref{ssec:intro_1} dans le cas o\`u $H$ est de dimension finie sur son centre.

Quatri\`emement, rappelons que, dans le cas commutatif, la {\it{r\'eduction faible$\rightarrow$scind\'e}} (voir \cite[\S1 B) 2)]{Pop96} et \cite[\S2.1.2]{DD97b}) est un proc\'ed\'e bien connu pour d\'eduire des r\'esultats sur les probl\`emes de plongement finis admettant une {\it{solution faible}} de r\'esultats sur les probl\`emes de plongement finis scind\'es. Des applications usuelles de ce proc\'ed\'e sont que la conjecture de D\`ebes et Deschamps est \'equivalente \`a la conjecture affirmant que tout probl\`eme de plongement fini $G \rightarrow {\rm{Gal}}(\ell/k)$ sur n'importe quel corps commutatif $k$ poss\'edant une solution faible poss\`ede en fait une solution g\'eom\'etrique ${\rm{Gal}}(e/k(t)) \rightarrow G$ v\'erifiant $e \cap \overline{k}=\ell$, et que cette derni\`ere conjecture est vraie si $k$ est ample.

Nous \'etendons la notion de solution faible et la r\'eduction faible$\rightarrow$scind\'e \`a la situation des probl\`emes de plongement finis sur les corps de dimension finie sur leurs centres. Les propositions \ref{prop:fiber1} et \ref{prop:fiber2} constituent nos r\'esultats pr\'ecis. Ceux-ci nous permettent d'\'etendre le th\'eor\`eme \ref{thm:3} en montrant, en particulier, que, {\it{si $H$ est un corps de dimension finie sur son centre $h$, alors tout probl\`eme de plongement fini sur $H$ admettant une solution faible poss\`ede une solution g\'eom\'etrique, si $h$ est un corps ample}} (voir corollaire \ref{thm:4}).

\vspace{2mm}

{\bf{Remerciements.}} Nous souhaiterions remercier le rapporteur pour de pr\'ecieux commentaires qui ont permis d'am\'eliorer cet article. Ce travail de recherche s'est effectu\'e dans le cadre du projet TIGANOCO, qui est financ\'e par l'Union europ\'eenne dans le cadre du programme op\'erationnel FEDER/FSE 2014-2020.

\section{Pr\'eliminaires} \label{sec:prelim}

Dans cette partie, nous collectons le mat\'eriel sur les extensions de corps et les corps de fractions rationnelles qui sera utilis\'e dans toute la suite de l'article.

\subsection{Corps} \label{ssec:prelim_1}

Soit $H$ un corps. Pour $y \in H \setminus \{0\}$, la conjugaison dans $H$ par $y$ est not\'ee $I_H(y)$ (i.e. $I_H(y)(x)=yxy^{-1}$ pour tout $x \in H$); c'est un automorphisme {\it{int\'erieur}} de $H$. L'{\it{ordre int\'erieur}} d'un automorphisme $\sigma$ de $H$ est le plus petit $n \geq 1$ tel que $\sigma^n$ soit int\'erieur (si un tel $n$ existe), et $\infty$ sinon. Si $H$ est de dimension finie sur son centre $h$, le th\'eor\`eme de Skolem--Noether montre que l'ordre int\'erieur de $\sigma$ est l'ordre de la restriction de $\sigma$ \`a $h$.

Soit $H$ un corps de dimension finie $n^2$ sur son centre $h$. Rappelons que la {\it{norme r\'eduite}} de $H/h$ est d\'efinie comme suit. Soit $d$ un {\it{corps de d\'ecomposition}} de la $h$-alg\`ebre $H$, c'est-\`a-dire $d$ est un corps contenant $h$ tel qu'il existe un isomorphisme de $h$-alg\`ebres $\phi : H \otimes_{h} d \rightarrow \mathcal{M}_n(d)$. Alors la norme r\'eduite ${\rm{Nrd}}_{H/h}$ de $H/h$ est d\'efinie par ${\rm{Nrd}}_{H/h} ={\rm{det}} \circ \phi \circ \id : H \rightarrow d,$ o\`u ${\rm{det}} : \mathcal{M}_n(d) \rightarrow d$ et $\id : H \rightarrow H \otimes_{h} d$ sont les applications d\'eterminant et $x \mapsto x \otimes 1$. La fonction ${\rm{Nrd}}_{H/h}$ ne d\'epend, ni du choix du corps de d\'ecomposition $d$, ni de l'isomorphisme $\phi$, et on a en fait ${\rm{Nrd}}_{H/h}(x) \in h$ pour tout $x \in H$ (voir, par exemple, \cite{Bou12}). Si $e_1, \dots, e_{n^2}$ forment une $h$-base de $H$ et si $(x_1, \dots, x_{n^2}) \in h^{n^2}$, on pose
\begin{equation} \label{eq:red_norm}
\mathcal{F}_H(x_1, \dots, x_{n^2}) = {\rm{Nrd}}_{H/h}(x_1 e_1 + \cdots + x_{n^2} e_{n^2}).
\end{equation}
A partir de maintenant, on consid\`ere $\mathcal{F}_H$ comme un polyn\^ome dans $h[x_1, \dots, x_{n^2}]$.

\subsection{Extensions de corps} \label{ssec:prelim_2}

Si $L/H$ est une extension de corps (c'est-\`a-dire $H \subseteq L$), alors $L$ peut \^etre consid\'er\'e comme espace vectoriel sur $H$ \`a gauche et comme espace vectoriel sur $H$ \`a droite. Dans cet article, nous consid\'ererons $L$ comme espace vectoriel sur $H$ \`a gauche.

Soit $L/H$ une extension de corps. Le groupe des automorphismes de $L$ laissant fixe $H$ point par point est le {\it{groupe d'automorphismes}} de $L/H$, not\'e ${\rm{Aut}}(L/H)$. On dit que $L/H$ est {\it{ext\'erieure}} si le seul automorphisme int\'erieur de $L$ appartenant \`a ${\rm{Aut}}(L/H)$ est l'identit\'e ${\rm{id}}_L$ de $L$. De mani\`ere \'equivalente, si $\widetilde{H} \subseteq L$ est le {\it{commutant}} de $H$ dans $L$, c'est-\`a-dire l'ensemble des \'el\'ements $x$ de $L$ tels que $xy=yx$ pour tout $y \in H$, alors $L/H$ est ext\'erieure si et seulement si $\widetilde{H}$ est \'egal au centre de $L$. Ainsi, si $L/H$ est ext\'erieure, le centre de $H$ est contenu dans celui de $L$ et l'ordre int\'erieur d'un \'el\'ement $\sigma$ de ${\rm{Aut}}(L/H)$ vaut l'ordre de $\sigma$.

\begin{lemme} \label{lem:outer}
Soit $L/H$ une extension ext\'erieure. 

\vspace{0.5mm}

\noindent
{\rm{1)}} Si $H$ est commutatif, alors $L$ l'est aussi.

\vspace{0.5mm}

\noindent
{\rm{2)}} Pour tout corps interm\'ediaire $H \subseteq F \subseteq L$, l'extension $L/F$ est ext\'erieure.
\end{lemme}

\begin{proof}[Preuve]
1) Supposons $H$ commutatif. Comme $L/H$ est ext\'erieure, $H$ est contenu dans le centre de $L$ et tout automorphisme int\'erieur de $L$ est donc dans ${\rm{Aut}}(L/H)$. Comme $L/H$ est ext\'erieure, ${\rm{id}}_L$ est donc le seul automorphisme int\'erieur de $L$, i.e. $L$ est commutatif.

\vspace{0.5mm}

\noindent
{\rm{2)}} Si $I_L(y)$ ($y \in L^*$) fixe $F$ point par point, alors $I_L(y)$ fixe $H$ point par point. Comme $L/H$ est ext\'erieure, cela entra\^ine $I_L(y) = {\rm{id}}_L$. Ainsi $L/F$ est ext\'erieure.
\end{proof}

Comme d\'efini par Artin, une extension de corps $L/H$ est {\it{galoisienne}} si tout \'el\'ement de $L$ laiss\'e fixe par tous les \'el\'ements de ${\rm{Aut}}(L/H)$ est dans $H$. Si $L/H$ est galoisienne, le groupe d'automorphismes ${\rm{Aut}}(L/H)$ est le {\it{groupe de Galois}} de $L/H$, not\'e ${\rm{Gal}}(L/H)$.

Soient $H$ un corps et $k$ un sous-corps du centre de $H$. Soit $\ell/k$ une extension galoisienne de corps commutatifs \`a groupe de Galois fini. Supposons que $M= H \otimes_k \ell$ soit un corps. Alors, par \cite[lemme 2.1.1]{Beh21}, l'extension $M/H$ est galoisienne (on identifie $H$ et $H \otimes_k k$) et, pour tout $x \in \ell$ et tout $\sigma \in {\rm{Gal}}(M/H)$, on a $\sigma(1 \otimes x) =1 \otimes \widetilde{x}$ pour un unique $\widetilde{x} \in \ell$. De plus, $ \widetilde{\sigma} : x \mapsto \widetilde{x}$ est un \'el\'ement de ${\rm{Gal}}(\ell/k)$ et l'application suivante est en fait un isomorphisme :
\begin{equation} \label{restilde}
\widetilde{{\rm{res}}}^{M/H}_{\ell/k}: \left \{ \begin{array} {ccc}
{\rm{Gal}}(M/H) & \longrightarrow & {\rm{Gal}}(\ell/k) \\
\sigma & \longmapsto & \widetilde{\sigma}
\end{array} \right. .
\end{equation}

Le th\'eor\`eme suivant, qui repose sur \cite[th\'eor\`eme 7]{DL20} et sa preuve, d\'ecrit les extensions galoisiennes \`a groupe de Galois fini d'un corps de dimension finie sur son centre :

\begin{theo} \label{thm:DL2}
Soit $H$ un corps de dimension finie sur son centre $h$.

\vspace{0.5mm}

\noindent
{\rm{1)}} Soit $L/H$ une extension galoisienne \`a groupe de Galois fini. On a :

\vspace{0.5mm}

{\rm{a)}} le centre $\ell$ de $L$ est une extension finie galoisienne de $h$, 

\vspace{0.5mm}

{\rm{b)}} $\mathcal{F}_H$ (voir \eqref{eq:red_norm}) ne poss\`ede que le z\'ero trivial sur $\ell$,

\vspace{0.5mm}

{\rm{c)}} $L = H \otimes_{h} \ell$ \footnote{Plus pr\'ecis\'ement, l'application $x \otimes y \in H \otimes_h \ell \mapsto xy \in L$ est un isomorphisme.},

\vspace{0.5mm}

{\rm{d)}} l'application $\widetilde{{\rm{res}}}^{L/H}_{\ell/h}$ (voir \eqref{restilde}) est un isomorphisme,

\vspace{0.5mm}

{\rm{e)}} $L/H$ est ext\'erieure.

\vspace{0.5mm}

\noindent
{\rm{2)}} R\'eciproquement, soit $\ell$ un corps commutatif qui est une extension galoisienne de $h$ \`a groupe de Galois fini et sur lequel $\mathcal{F}_H$ ne poss\`ede que le z\'ero trivial. On a : 

\vspace{0.5mm}

{\rm{a)}} $H \otimes_{h} \ell$ est un corps de centre $\ell$,

\vspace{0.5mm}

{\rm{b)}} l'extension $(H \otimes_{h} \ell)/H$ est galoisienne,

\vspace{0.5mm}

{\rm{c)}} l'application $\widetilde{{\rm{res}}}^{(H \otimes_{h} \ell)/H}_{\ell/h}$ est un isomorphisme.
\end{theo}

\subsection{Corps de fractions rationnelles} \label{ssec:prelim_3}

On dit qu'un anneau int\`egre $A$ \footnote{c'est-\`a-dire $A \not= \{0\}$ et $ab \not=0$ pour tous $a \in A \setminus \{0\}$ et $b \in A \setminus \{0\}$.} est un {\it{anneau de Ore}} si, pour tous $x, y \in A \setminus \{0\}$, il existe $a, b \in A$ tels que $xa = yb \not=0$. Par \cite[Theo\-rem 6.8]{GW04}, si $A$ est un anneau de Ore, il existe un corps $H$ contenant $A$ tel que tout \'el\'ement de $H$ s'\'ecrive sous la forme $ab^{-1}$ avec $a \in A$ et $b \in A \setminus \{0\}$. De plus, par \cite[Proposition 1.3.4]{Coh95}, $H$ est unique \`a isomorphisme pr\`es et l'on dit que $H$ est le {\it{corps de fractions}} de $A$.

Soient $H$ un corps et $\sigma$ un automorphisme de $H$. L'{\it{anneau de polyn\^omes}} $H[t, \sigma]$ est l'anneau des polyn\^omes $a_0 + a_1 t + \cdots + a_n t^n$ \`a coefficients dans $H$, muni de l'addition usuelle et dont la multiplication v\'erifie $ta = \sigma(a) t$ pour tout $a \in H$. Notons que $H[t, \sigma]$ est commutatif si et seulement si $H$ l'est et $\sigma={\rm{id}}_H$. Au sens de Ore (voir \cite{Ore33}), l'anneau $H[t, \sigma]$ est l'anneau de polyn\^omes $H[t, \sigma, \delta]$ en la variable $t$, o\`u la $\sigma$-d\'erivation $\delta$ vaut 0. L'anneau $H[t, \sigma]$ est int\`egre, puisque le degr\'e du produit de deux polyn\^omes vaut la somme des degr\'es. De plus, $H[t, \sigma]$ est un anneau de Ore (voir, par exemple, \cite[Theorem 2.6 \& Corollary 6.7]{GW04}). Ainsi $H[t, \sigma]$ poss\`ede un unique corps de fractions, not\'e $H(t, \sigma)$ et appel\'e {\it{corps de fractions rationnelles}}. Si $\sigma={\rm{id}}_H$, nous \'ecrivons $H[t]$ et $H(t)$ \`a la place de $H[t, \sigma]$ et $H(t, \sigma)$. Nous consid\'erons aussi {\it{le corps $H((t, \sigma))$ des s\'eries de Laurent}} de la forme $\sum_{n \geq n_0} a_n t^n$ avec $n_0 \in \Zz$ et $a_n \in H$ pour tout $n$, muni de l'addition habituelle et dont la multiplication v\'erifie $ta = \sigma(a) t$ pour tout $a \in H$. Comme $H[t, \sigma]$ peut \^etre plong\'e dans $H((t, \sigma))$ et est un anneau de Ore, on peut plonger $H(t, \sigma)$ dans $H((t, \sigma))$. Si $\sigma={\rm{id}}_H$, nous \'ecrivons $H((t))$ au lieu de $H((t, \sigma))$. Nous renvoyons \`a \cite[\S2.3]{Coh95} pour plus de d\'etails.

\begin{lemme} \label{lemma:easy}
Soient $H$ un corps et $\sigma$ un automorphisme de $H$ d'ordre fini $m$. Notons $\widetilde{\sigma}$ la restriction de $\sigma$ au centre $h$ de $H$.

\vspace{0.5mm}

\noindent
{\rm{1)}} Si $H$ est de dimension finie sur $h$, alors $H(t, \sigma)$ est de dimension finie sur son centre.

\vspace{0.5mm}

\noindent
{\rm{2)}} Supposons que l'ordre int\'erieur de $\sigma$ soit \'egal \`a $m$. Alors le centre de $H(t,\sigma)$ vaut $h^{\langle \widetilde{\sigma} \rangle}(t^{m})$.
\end{lemme}

\begin{proof}[Preuve]
1) Comme le centre de $H(t, \sigma)$ contient $h^{\langle \widetilde{\sigma} \rangle}(t^m)$, il suffit de montrer que $H(t, \sigma)$ est de dimension finie sur $h^{\langle \widetilde{\sigma} \rangle}(t^m)$. Pour cela, notons que, comme $H$ est de dimension finie sur $h$ et $\sigma$ est d'ordre fini, la dimension de $H$ sur $h^{\langle \widetilde{\sigma} \rangle}$ est finie. Soit $e_1, \dots, e_r$ une $h^{\langle \widetilde{\sigma} \rangle}$-base de $H$ et soit $\Gamma$ l'espace vectoriel engendr\'e sur $h^{\langle \widetilde{\sigma} \rangle}(t^m)$ par tous les \'el\'ements de la forme $e_i t^j$ avec $1 \leq i \leq r$ et $0 \leq j \leq m-1$. On a alors $H[t, \sigma] \subseteq \Gamma \subseteq H(t, \sigma)$. De plus, $\Gamma$ est un anneau et est de dimension finie sur $h^{\langle \widetilde{\sigma} \rangle}(t^m)$. Ainsi $\Gamma$ est un corps. Comme tout \'el\'ement de $H(t, \sigma)$ s'\'ecrit sous la forme $P(t) Q(t)^{-1}$ avec $P(t), Q(t) \in H[t, \sigma]$, on obtient $H(t, \sigma) =\Gamma$.

\vspace{1mm}

\noindent
2) Soit $x=\sum_{n \geq n_0}a_{n}t^{n}$ un \'el\'ement du centre de $H(t,\sigma)$ avec $a_{n_0} \neq 0$. Par le {\it{crit\`ere de rationalit\'e}} (voir \cite[Proposition 2.3.3]{Coh95}), on peut trouver deux entiers $s \geq 1 $ et $n_1 \geq 0$ et des \'el\'ements $y_{1},\dots,y_{s}$ de $H$ tels que
\begin{equation} \label{eq:star}
a_n=a_{n-1}\sigma^{n-1}(y_{1})+a_{n-2}\sigma^{n-2}(y_{2})+\cdots + a_{n-s}\sigma^{n-s}(y_{s})
\end{equation}
pour tout $n \geq n_1$. Puisque $x$ est dans le centre de $H(t, \sigma)$, on a $tx=xt$, c'est-\`a-dire 
$$\sum_{n \geq n_0} \sigma(a_n)t^{n+1}=\sum_{n \geq n_0}a_nt^{n+1}.$$
Cela entra\^ine $\sigma(a_{n})=a_{n}$, c'est-\`a-dire $a_{n} \in H^{ \langle \sigma \rangle}$ pour tout $n$. On a donc $x \in H^{\langle \sigma \rangle}((t))$. De plus, pour tout $a \in H$, on a $ax=xa$, c'est-\`a-dire
$$\sum_{n \geq n_0} aa_{n}t^{n}=\sum_{n \geq n_0} a_n \sigma^{n}(a)t^{n},$$
Pour $n \geq n_0$ tel que $a_n \neq 0$, on a donc $\sigma^{n}(a)=a_{n}^{-1}a a_{n}$, i.e. $\sigma^n$ est int\'erieur. Pour un tel $n$, l'hypoth\`ese sur l'ordre int\'erieur entra\^ine alors que $m$ divise $n$. On a donc $aa_n = a_na$ pour $a \in H$ et $n$ tel que $a_n \not=0$, i.e. $x \in h^{\langle \widetilde{\sigma} \rangle }((t^{m}))$. Soit $u$ le quotient de la division euclidienne de $s$ par $m$. Par \eqref{eq:star}, pour $l \in \mathbb{N}$ tel que $ml \geq n_1$, on obtient $$a_{ml}= a_{m(l-1)}y_{m} +a_{m(l-2)}y_{2m}+ \cdots + a_{m(l-u)}y_{mu}.$$ Ainsi, par le crit\`ere de rationalit\'e, $x \in H(t^m)$. Comme $x$ est dans le centre de $H(t, \sigma)$, on obtient en fait que $x$ est dans le centre de $H(t^m)$, c'est-\`a-dire dans $h(t^m)$ (voir, par exemple, \cite[Proposition 2.1.5]{Coh95}). Ainsi $x \in h^{\langle \widetilde{\sigma} \rangle}((t^m)) \cap h(t^m)$. Comme $h^{\langle \widetilde{\sigma} \rangle}((t^m)) \cap  h = h^{\langle \widetilde{\sigma} \rangle}$ et $h/h^{\langle \widetilde{\sigma} \rangle}$ est galoisienne finie, $h^{\langle \widetilde{\sigma} \rangle}((t^m))$ et $h$ sont lin\'eairement disjoints sur $h^{\langle \widetilde{\sigma} \rangle}$. Ainsi $h(t^m)$ et $h^{\langle \widetilde{\sigma} \rangle}((t^m))$ sont lin\'eairement disjoints sur $h^{\langle \widetilde{\sigma} \rangle} (t^m)$ (voir \cite[Lemma 2.5.3]{FJ08}), i.e. $h(t^m) \cap h^{\langle \widetilde{\sigma} \rangle}((t^m)) = h^{\langle \widetilde{\sigma} \rangle} (t^m)$. On a donc $x \in h^{\langle \widetilde{\sigma} \rangle} (t^m)$.
\end{proof}

\subsection{Extensions de corps de fractions rationnelles} \label{ssec:prelim_4}

On s'int\'eresse enfin aux extensions de corps  de la forme $L(t, \tau)/H(t, \sigma)$. Commen\c cons par le lemme \'el\'ementaire suivant :

\begin{lemme} \label{triv_1}
Soient $L/H$ une extension de corps, $\sigma \in {\rm{Aut}}(H)$ et $\tau$ un automorphisme de $L$ d'ordre fini prolongeant $\sigma$.

\vspace{0.5mm}

\noindent
{\rm{1)}} Les conditions suivantes sont \'equivalentes :

\vspace{0.5mm}

{\rm{i)}} $\langle \tau, {\rm{Aut}}(L/H) \rangle = {\rm{Aut}}(L/H) \rtimes \langle \tau \rangle$,

\vspace{0.5mm}

{\rm{ii)}} ${\rm{Aut}}(L/H) \cap \langle \tau \rangle = \{{\rm{id}}_L\}$,

\vspace{0.5mm}

{\rm{iii)}} l'ordre de $\tau$ vaut l'ordre de $\sigma$.

\vspace{0.5mm}

\noindent
{\rm{2)}} Supposons les trois conditions suivantes v\'erifi\'ees :

\vspace{0.5mm}

{\rm{a)}} $L$ est commutatif,

\vspace{0.5mm}

{\rm{b)}} $L/H$ est galoisienne,

\vspace{0.5mm}

{\rm{c)}} l'ordre de $\tau$ vaut l'ordre de $\sigma$.

\vspace{0.5mm}

\noindent
Alors $L^{\langle \tau \rangle}$ et $H$ sont lin\'eairement disjoints sur $H^{\langle \sigma \rangle}$ et $L=L^{\langle \tau \rangle} H$.
\end{lemme}

\begin{proof}[Preuve]
1) Comme $\tau(H) = \sigma(H) = H$, on a ${\rm{Aut}}(L/H) \trianglelefteq \langle \tau, {\rm{Aut}}(L/H) \rangle$ et on a donc $\langle \tau, {\rm{Aut}}(L/H) \rangle = {\rm{Aut}}(L/H) \langle \tau \rangle$. Ainsi i) $\Leftrightarrow$ ii) est vraie. De plus, on a ${\rm{Aut}}(L/H) \cap \langle \tau \rangle = \langle \tau^m \rangle$, o\`u $m$ est l'ordre de $\sigma$. Par cons\'equent, ii) $\Leftrightarrow$ iii) est aussi vraie.

\vspace{0.5mm}

\noindent
2) Par a) et c), on a $[L^{\langle \tau \rangle} : H^{\langle \sigma \rangle}] = [L:H]$. De plus, $L^{\langle \tau \rangle} H = L^{\langle \tau \rangle} L^{{\rm{Gal}}(L/H)} = L^{\langle \tau \rangle \cap {\rm{Gal}}(L/H)} = L$  par b), c) et 1). Ainsi $[L^{\langle \tau \rangle} : H^{\langle \sigma \rangle}] = [L^{\langle \tau \rangle} H:H]$, ce qui ach\`eve la preuve.
\end{proof}

La proposition suivante sera utilis\'ee pour d\'efinir des applications de restriction :

\begin{prop} \label{angelot}
Soient $H$ un corps de dimension finie sur son centre $h$ et $\sigma \in {\rm{Aut}}(H)$. Notons $\widetilde{\sigma}$ la restriction de $\sigma$ \`a $h$. Soient $L/H$ une extension galoisienne \`a groupe de Galois fini et $\tau$ un automorphisme de $L$ d'ordre fini \'etendant $\sigma$. Soient $\ell$ le centre de $L$ et $\widetilde{\tau}$ la restriction de $\tau$ \`a $\ell$. Supposons la condition suivante v\'erifi\'ee :

\vspace{0.5mm}

\noindent
{\rm{($*$)}} l'ordre de $\widetilde{\tau}$ vaut l'ordre de $\widetilde{\sigma}$.

\vspace{0.5mm}

\noindent
Alors $L(t, \tau) \cong H(t, \sigma) \otimes _{h^{\langle \widetilde{\sigma} \rangle}(t^m)} \ell^{\langle \widetilde{\tau} \rangle}(t^m)$, o\`u $m$ est l'ordre de $\tau$.
\end{prop}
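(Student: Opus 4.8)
The plan is to embed everything into $L(t,\tau)$ and to identify the right-hand tensor product with the subalgebra generated there by $H(t,\sigma)$ and $\ell^{\langle\widetilde\tau\rangle}$. Set $k_0=h^{\langle\widetilde\sigma\rangle}$, $\ell_0=\ell^{\langle\widetilde\tau\rangle}$ and $Z=k_0(t^m)$. First I would record the structural facts. By Theorem \ref{thm:DL2} applied to $L/H$, the centre $\ell$ is a finite Galois extension of $h$, one has $L=H\otimes_h\ell$, and $\widetilde{{\rm res}}^{L/H}_{\ell/h}$ is the restriction-to-$\ell$ isomorphism ${\rm Gal}(L/H)\to{\rm Gal}(\ell/h)$. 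Since $L/H$ is exterior, $h\subseteq\ell$ and $\widetilde\tau$ restricts to $\widetilde\sigma$ on $h$; as $\tau$ extends $\sigma$ and $\tau^m={\rm id}_L$, also $\sigma^m={\rm id}_H$, so $t^m$ is central in both $H(t,\sigma)$ and $L(t,\tau)$ and $Z$ is a common central subfield, with $\ell_0(t^m)$ a commutative field between $Z$ and $L(t,\tau)$. The conceptual key is then to apply Lemma \ref{triv_1}(2) to the extension $\ell/h$ equipped with $\widetilde\tau$ and $\widetilde\sigma$: hypotheses (a), (b) hold because $\ell$ is commutative and $\ell/h$ is Galois, and (c) is exactly ($*$). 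This yields $\ell=\ell_0\otimes_{k_0}h$ with $[\ell_0:k_0]=[\ell:h]<\infty$, whence $L\cong H\otimes_{k_0}\ell_0$. Crucially, $\widetilde\tau$ fixes $\ell_0$ pointwise, so under this identification $\tau=\sigma\otimes\id_{\ell_0}$, because $\tau$ is a ring automorphism determined by its restrictions to the generators $H$ and $\ell_0$.

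Next I would transport the twist through the tensor product. Since $\tau$ acts trivially on the central subfield $\ell_0$, the assignment $(at^n)\otimes y\mapsto(a\otimes y)t^n$ defines a ring isomorphism $H[t,\sigma]\otimes_{k_0}\ell_0\xrightarrow{\sim}(H\otimes_{k_0}\ell_0)[t,\sigma\otimes\id]=L[t,\tau]$ (this is a direct check on the free-module decompositions $\bigoplus_n (H\otimes_{k_0}\ell_0)t^n$ and on the multiplication rule). Passing to fraction fields, $L(t,\tau)={\rm Frac}(H[t,\sigma]\otimes_{k_0}\ell_0)$. On the other hand, because $\ell_0/k_0$ is finite and $t^m$ is transcendental, $\ell_0(t^m)=\ell_0\otimes_{k_0}Z$, and therefore the stated target rewrites as $H(t,\sigma)\otimes_Z\ell_0(t^m)\cong H(t,\sigma)\otimes_{k_0}\ell_0$. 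It thus suffices to prove that the multiplication map $\mu\colon H(t,\sigma)\otimes_{k_0}\ell_0\to L(t,\tau)$, $f\otimes y\mapsto fy$, is an isomorphism.

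Finally I would prove $\mu$ bijective. For injectivity I would fix a $k_0$-basis $y_1,\dots,y_r$ of $\ell_0$ and show it is left $H(t,\sigma)$-linearly independent in $L(t,\tau)$: given $\sum_i f_iy_i=0$, a common denominator from the Ore condition reduces this to $\sum_i p_iy_i=0$ with $p_i\in H[t,\sigma]$, and since the $y_i$ are central and commute with $t$, collecting powers of $t$ gives $\sum_i a_{i,n}y_i=0$ in $L$ for each $n$; as $L=H\otimes_{k_0}\ell_0$ is $H$-free on $\{y_i\}$, all $a_{i,n}=0$, so all $f_i=0$. Hence $\mu$ is injective and its image $R=H(t,\sigma)\cdot\ell_0$ is a domain that is finite-dimensional over the central subfield $Z$ (of dimension $(\dim_ZH(t,\sigma))\,[\ell_0:k_0]<\infty$, using Lemma \ref{lemma:easy}(1) for finiteness of $H(t,\sigma)$ over its centre). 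A domain finite-dimensional over a central subfield is a division ring, so $R$ is a division ring; since it contains $H[t,\sigma]$ and $\ell_0$ it contains $L[t,\tau]$, and a division ring inside $L(t,\tau)={\rm Frac}(L[t,\tau])$ containing $L[t,\tau]$ must equal $L(t,\tau)$. Thus $\mu$ is onto, hence an isomorphism. I expect the main obstacle to be the identification $\tau=\sigma\otimes\id_{\ell_0}$, which is what makes ($*$) indispensable (through Lemma \ref{triv_1}) and which collapses the twisted construction into an untwisted tensor factor; once this is in place, the finite-dimensional division-ring argument for surjectivity is the only remaining nontrivial point.
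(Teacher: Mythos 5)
Your proof is correct and follows essentially the same route as the paper's: both hinge on Lemma \ref{triv_1} to turn a $h^{\langle \widetilde{\sigma} \rangle}$-basis of $\ell^{\langle \widetilde{\tau} \rangle}$ into an $H$-basis of $L$, prove surjectivity of the multiplication map by showing its image is a division ring containing $L[t,\tau]$, and prove injectivity by clearing denominators via the Ore property and comparing coefficients. Your repackaging (replacing the base $h^{\langle \widetilde{\sigma} \rangle}(t^m)$ by $h^{\langle \widetilde{\sigma} \rangle}$ and making explicit the identification $\tau = \sigma \otimes \mathrm{id}_{\ell^{\langle \widetilde{\tau} \rangle}}$) is only a cosmetic streamlining of the same argument.
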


\begin{proof}[Preuve]
Notons tout d'abord que, par le th\'eor\`eme \ref{thm:DL2}, $\ell$ est une extension galoisienne de $h$. De plus, puisque $\tau$ \'etend $\sigma$, on a $h^{\langle \widetilde{\sigma} \rangle} \subseteq \ell^{\langle \widetilde{\tau} \rangle}$. Ainsi $H(t, \sigma) \otimes _{h^{\langle \widetilde{\sigma} \rangle}(t^m)} \ell^{\langle \widetilde{\tau} \rangle}(t^m)$ est bien d\'efini.
 
Consid\'erons maintenant l'application $h^{\langle \widetilde{\sigma} \rangle}(t^m)$-lin\'eaire
$$\psi : \left \{ \begin{array} {ccc}
H(t, \sigma) \otimes _{h^{\langle \widetilde{\sigma} \rangle}(t^m)} \ell^{\langle \widetilde{\tau} \rangle}(t^m) & \longrightarrow & L(t, \tau) \\
y \otimes z & \longmapsto & yz
\end{array}. \right. $$
Puisque $\ell^{\langle \widetilde{\tau} \rangle}(t^m)$ est contenu dans le centre de $L(t, \tau)$, $\psi$ est un morphisme d'alg\`ebres. De plus, ${\rm{Im}}(\psi)$ contient $H(t, \sigma)$ et est de dimension finie sur ce corps. Ainsi ${\rm{Im}}(\psi)$ est un corps.

Nous montrons ensuite que $\psi$ est surjective. Pour cela, notons que, par ($*$) et le lemme \ref{triv_1}, $\ell^{\langle \widetilde{\tau} \rangle}$ et $h$ sont lin\'eairement disjoints sur $h^{\langle \widetilde{\sigma} \rangle}$ et $\ell^{\langle \widetilde{\tau} \rangle} h = \ell$. Par cons\'equent, si $f_1, \dots, f_r$ est une $h^{\langle \widetilde{\sigma} \rangle}$-base de $\ell^{\langle \widetilde{\tau} \rangle}$, alors $f_1, \dots, f_r$ est une $h$-base de $\ell$. Puisque $L = H \otimes_h \ell$ (voir th\'eor\`eme \ref{thm:DL2}), on en d\'eduit que  $f_1, \dots, f_r$ (que l'on identifie \`a $1 \otimes f_1, \dots, 1 \otimes f_r$) est une $H$-base de $L$.

Maintenant, soit $x=\sum_{i}a_i t^i \in L[t,\tau]$ ($(a_i)_i \subset L$). Pour tout $i$, il existe $(a_{i,k})_{k=1}^{r} \subset H$ tel que $a_i=\sum_{k=1}^{r}a_{i,k}f_{k}$. Ainsi
$$x=\sum_{i} \left( \sum_{k=1}^{r}a_{i,k}f_{k} \right)t^i=\sum_{i} \sum_{k} \left( (a_{i,k}t^{i}) f_{k}\right) =\psi \left( \sum_{i} \sum_{k} \left( (a_{i,k}t^{i}) \otimes  f_{k}\right) \right).$$
Ainsi $L[t,\tau] \subseteq \mathrm{Im}(\psi)$. Comme $\mathrm{Im}(\psi)$ est un corps et tout \'el\'ement de $L(t, \tau)$ s'\'ecrit sous la forme $P(t) Q(t)^{-1}$ avec $P(t), Q(t) \in L[t, \tau]$, on en d\'eduit $L(t,\tau)=\mathrm{Im}(\psi)$.

Montrons enfin que $\psi$ est injective. Pour cela, notons que, puisque $H$ est de dimension finie sur $h$ et $\sigma$ est d'ordre fini, la dimension de $H$ sur $h^{\langle \widetilde{\sigma} \rangle}$ est finie. Consid\'erons une $h^{\langle \widetilde{\sigma} \rangle}$-base $(e_l)_l$ de $H$. Soit $x= \sum_{i}x_i \otimes y_i \in H(t, \sigma) \otimes _{h^{\langle \widetilde{\sigma} \rangle}(t^m)} \ell^{\langle \widetilde{\tau} \rangle}(t^m)$ ($(x_i)_i \subset H(t,\sigma)$ et $(y_i)_i \subset \ell^{\langle \widetilde{\tau} \rangle}(t^m)$) tel que $\psi(x) = 0$. Comme $\psi(x)= 0$ et $H[t, \sigma]$ est un anneau de Ore, par it\'eration de la propri\'et\'e de Ore, on peut supposer $(x_i)_i \subset H[t,\sigma]$ et $(y_i)_i \subset \ell^{\langle \widetilde{\tau} \rangle}[t^m]$. On peut alors \'ecrire
\begin{equation}\label{eq:1_ang}
x=\sum_{l} \sum_{j=0}^{m-1} \lambda_{l,j} (e_l t^j \otimes v_{l,j} ),
\end{equation} 
o\`u $(\lambda_{l,j})_{l,j} \subset h^{\langle \widetilde{\sigma} \rangle}$ et $(v_{l,j})_{l,j} \subset \ell^{\langle \widetilde{ \tau} \rangle}[t^m] \setminus \{0 \}$. Pour tout $(l,j)$, on pose $v_{l,j}=\sum_{a} b_{l,j,a}t^{ma},$ avec $(b_{l,j,a}) _{l,j,a}\subset \ell^{\langle \widetilde{\tau} \rangle}$ et, pour tout $(l,j,a)$, on a $b_{l,j,a}=\sum_{k} c_{l,j,a,k} f_{k}$ avec $(c_{l,j,a,k})_{l,j,a,k} \subset h^{\langle \widetilde{\sigma} \rangle}$. Ainsi
$$\psi(x)= \sum_{(j,a)} \left( \sum_{k} \left( \sum_{l} \left( \lambda_{l,j} c_{l,j,a,k} \right) e_{l}   \right)f_k  \right) t^{ma+j}.$$
L'\'egalit\'e pr\'ec\'edente montre alors que, pour tout $(j,a,k,l)$, on a
\begin{equation} \label{eq:2_ang}
\lambda_{l,j} c_{l,j,a,k}=0.
\end{equation}
Fixons maintenant $(l,j)$. Comme $v_{l,j} \neq 0$, il existe $(a,k)$ tel que $c_{l,j,a,k} \neq 0$. Ainsi, en utilisant \eqref{eq:2_ang}, on obtient $\lambda_{l,j}=0$. Par \eqref{eq:1_ang}, on en d\'eduit $x=0$.
\end{proof}

\begin{rk} \label{rk:bruno}
1) La condition ($*$) entra\^ine que les ordres de $\sigma$ et $\tau$ sont \'egaux. En effet, par le lemme \ref{triv_1}, il suffit de montrer ${\rm{Gal}}(L/H) \cap \langle \tau \rangle = \{{\rm{id}}_L\}$. Soit donc $j \geq 1$ tel que $\tau^j \in {\rm{Gal}}(L/H)$. En particulier, on a $\widetilde{\tau}^j \in {\rm{Gal}}(\ell/h)$. Si l'on suppose ($*$), alors, par le lemme \ref{triv_1}, on a ${\rm{Gal}}(\ell/h) \cap \langle \widetilde{\tau} \rangle = \{{\rm{id}}_\ell\}$ et donc $\widetilde{\tau}^j = {\rm{id}}_\ell$. Comme $L$ est de dimension finie sur son centre $\ell$, le th\'eor\`eme de Skolem--Noether entra\^ine que $\tau^j$ est int\'erieur. Or $\tau^j$ est dans ${\rm{Gal}}(L/H)$ et $L/H$ est ext\'erieure par le th\'eor\`eme \ref{thm:DL2}. On a donc $\tau^j = {\rm{id}}_L$.

\vspace{1mm}

\noindent
2) Si, dans la proposition \ref{angelot}, on suppose aussi que l'ordre de $\sigma$ vaut l'ordre de $\widetilde{\sigma}$, alors, par le lemme \ref{lemma:easy}, le th\'eor\`eme de Skolem--Noether et le 1), le centre de $H(t, \sigma)$ vaut $h^{\langle \widetilde{\sigma} \rangle}(t^m)$. Ainsi, dans la preuve pr\'ec\'edente, l'injectivit\'e de l'application $\psi$ est une cons\'equence directe de, par exemple, \cite[th\'eor\`eme II-3]{Bla72}.

\vspace{1mm}

\noindent
3) En g\'en\'eral, la r\'eciproque du 1) est fausse. En effet, rappelons tout d'abord que le {\it{niveau}} d'un corps commutatif $h$ est, soit le plus entier $n \geq 1$ pour lequel il existe $(x_1, \dots, x_n) \in {h^*}^n$ tel que $-1 = x_1^2 + \dots + x_n^2$ (si un tel $n$ existe), soit $\infty$ (sinon). Par le th\'eor\`eme de Pfister (voir, par exemple, \cite[Chapter XI, Theorem 2.2]{Lam05}), le niveau de $h$ est infini ou une puissance de 2. De plus, pour un corps commutatif $h$ de niveau au moins 4, notons $H_h$ le {\it{corps des quaternions}} \`a coefficients dans $h$, i.e. $H_h=h \oplus h i \oplus h j \oplus h k$ ($i^2= j^2= k^2 =i j k=-1$).

On se donne maintenant une extension galoisienne $\ell/h$ de corps commutatifs telle que $\ell$ soit de niveau au moins 4 et telle que ${\rm{Gal}}(\ell/h)$ soit fini d'ordre pair (par exemple, $h = \Qq$ et $\ell=\Qq(\sqrt{2})$). Posons $L= H_\ell$ et $H= H_h$. Par le th\'eor\`eme \ref{thm:DL2}, $L/H$ est galoisienne \`a groupe de Galois fini d'ordre pair. Soit $\tau'$ un \'el\'ement d'ordre 2 de ${\rm{Gal}}(L/H)$, soit $\sigma = I_H(i)$ et soit $\tau = I_L(i) \circ \tau'$. Clairement, $\sigma$ est d'ordre 2 et $\widetilde{\sigma} = {\rm{id}}_h$. De plus, puisque $\tau'$ fixe $H$ point par point, $\tau$ prolonge $\sigma$. Mais, puisque $L/H$ est ext\'erieure (voir th\'eor\`eme \ref{thm:DL2}), $\tau'$ n'est pas int\'erieur et, ainsi, $\tau$ ne l'est pas non plus. Par le th\'eor\`eme de Skolem--Noether, on en d\'eduit $\widetilde{\tau} \not = {\rm{id}}_\ell$. Mais, pour $x \in L$, on a $I_L(i) \circ \tau'(x)= i \tau'(x) i^{-1}$
et donc $\tau^2(x) = I_L(i) \circ \tau' (i \tau'(x) i^{-1}) = i \tau'(i) \tau'^2(x) \tau'(i^{-1}) i^{-1} = i^2 x i^{-2} = x$, ce qui montre que $\tau$ est d'ordre 2.
\end{rk}

\section{Probl\`emes de plongement finis} \label{sec:termi}

Dans cette partie, nous \'etendons la terminologie des probl\`emes de plongement finis sur les corps commutatifs \`a la situation des corps de dimension finie sur leurs centres. Nous d\'emontrons aussi une version plus pr\'ecise du th\'eor\`eme \ref{thm:main_2} (voir th\'eor\`eme \ref{thm:DL_fep}).

\subsection{Applications de restriction} \label{ssec:termi_1}

Etant donn\'ees deux extensions galoisiennes $L/H$ et $F/M$ \`a groupes de Galois finis et telles que $L \subseteq F$ et $H \subseteq M$, nous notons ${\rm{res}}^{F/M}_{L/H}$ l'application de restriction ${\rm{Gal}}(F/M) \rightarrow {\rm{Gal}}(L/H)$ (c'est-\`a-dire ${\rm{res}}^{F/M}_{L/H}(\sigma)(x)=\sigma(x)$ pour tout $\sigma \in {\rm{Gal}}(F/M)$ et tout $x \in L$), si celle-ci est bien d\'efinie.

Contrairement au cas commutatif, l'application ${\rm{res}}^{F/M}_{L/H}$ n'est pas d\'efinie en g\'en\'eral. Nous donnons maintenant des conditions suffisantes sur $L$, $H$, $F$ et $M$ pour qu'elle le soit.

\begin{prop} \label{prop:res_res_res}
Soient $L/H$ et $F/M$ deux extensions galoisiennes \`a groupes de Galois finis, telles que $L \subseteq F$ et $H \subseteq M$, et v\'erifiant les conditions suivantes :

\vspace{0.25mm}

\noindent
{\rm{1)}} $F= M \otimes_{k_M} \ell_M$ et $L = H \otimes_{k_H} \ell_H$ pour des extensions galoisiennes $\ell_M/k_M$ et $\ell_H /k _H$ de corps commutatifs \`a groupes de Galois finis et telles que $k_M$ (resp. $k_H$) soit contenu dans le centre de $M$ (resp. de $H$),

\vspace{0.25mm}

\noindent
{\rm{2)}} il existe une extension galoisienne $\ell_0/k_0$ de corps commutatifs de groupe fini telle que

\vspace{0.25mm}

{\rm{a)}} $\ell_0 \subseteq \ell_M \cap \ell_H$ et $k_0 \subseteq k_M \cap k_H$,

\vspace{0.25mm}

{\rm{b)}} l'extension $\ell_H/k_0$ est galoisienne finie et $[\ell_0:k_0]=[\ell_H : k_H]$,

\vspace{0.25mm}

{\rm{c)}} ${\rm{Gal}}(\ell_H/\ell_0) \cap {\rm{Gal}}(\ell_H/k_H)  = \{{\rm{id}}_{\ell_H}\}$.

\vspace{0.25mm}

\noindent
Alors ${\rm{res}}^{F/M}_{L/H}$ est bien d\'efinie.
\end{prop}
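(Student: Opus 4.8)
The plan is to prove the proposition directly from the definition of a restriction map: I will show that for every $\sigma \in {\rm{Gal}}(F/M)$ and every $x \in L$ one has $\sigma(x) \in L$, and that the induced map $\sigma|_L$ belongs to ${\rm{Gal}}(L/H)$; the fact that $\sigma \mapsto \sigma|_L$ is then a group homomorphism is immediate. First I would reduce the problem to the ``central'' generators of $L$. By hypothesis 1), $L = H \otimes_{k_H} \ell_H$, so, identifying $\ell_H$ with $1 \otimes \ell_H \subseteq L$ (a subfield commuting with $H$), every element of $L$ is a finite sum $\sum_i h_i y_i$ with $h_i \in H$ and $y_i \in \ell_H$. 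Since $H \subseteq M$ and $\sigma$ fixes $M$ pointwise, $\sigma(\sum_i h_i y_i) = \sum_i h_i \, \sigma(y_i)$, so it suffices to prove that $\sigma(\ell_H) \subseteq L$.

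Next I would analyse how $\sigma$ acts on the relevant commutative pieces. Applying \eqref{restilde} to the decomposition $F = M \otimes_{k_M} \ell_M$ from hypothesis 1), there is $\widetilde{\sigma} \in {\rm{Gal}}(\ell_M/k_M)$ with $\sigma(1 \otimes x) = 1 \otimes \widetilde{\sigma}(x)$ for all $x \in \ell_M$; in other words, the restriction of $\sigma$ to the central subfield $\ell_M \subseteq F$ is $\widetilde{\sigma}$, which fixes $k_M$. Because $\ell_0/k_0$ is Galois (hence normal) and $k_0 \subseteq k_M$ by 2a), the subfield $\ell_0 \subseteq \ell_M$ is stable under the whole group ${\rm{Gal}}(\ell_M/k_M)$; in particular $\sigma(\ell_0) = \ell_0$ inside $F$. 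I would then use that $\ell_H/k_0$ is finite Galois by 2b): viewing $\ell_0$ and $k_H$ as intermediate fields of $\ell_H/k_0$, the Galois correspondence together with 2c) gives ${\rm{Gal}}(\ell_H / \ell_0 k_H) = {\rm{Gal}}(\ell_H/\ell_0) \cap {\rm{Gal}}(\ell_H/k_H) = \{{\rm{id}}_{\ell_H}\}$, whence $\ell_H = \ell_0 k_H$.

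With these ingredients the conclusion follows quickly. Since $k_H$ is contained in the center of $H \subseteq M$, the automorphism $\sigma$ fixes $k_H$ pointwise; combined with $\sigma(\ell_0)=\ell_0$ this yields $\sigma(\ell_H) = \sigma(\ell_0 k_H) \subseteq \ell_0 k_H = \ell_H \subseteq L$, so indeed $\sigma(L) \subseteq L$. Finally, $\sigma|_L$ is a left-$H$-linear (as $\sigma$ fixes $H$ pointwise) injective endomorphism of the finite-dimensional left $H$-vector space $L = H \otimes_{k_H} \ell_H$, hence bijective; being an automorphism of $L$ fixing $H$ and $L/H$ being Galois, $\sigma|_L \in {\rm{Gal}}(L/H)$. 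This shows that ${\rm{res}}^{F/M}_{L/H}$ is well defined.

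I expect the main subtlety to be not any single computation but the \emph{compatibility of all the embeddings inside $F$}: one must be sure that the copy of $\ell_0$ lying in the central part $\ell_M$ of $F$ and the copy lying in $\ell_H \subseteq L$ coincide as subsets of $F$ (so that ``$\sigma(\ell_0) = \ell_0$'' and ``$\ell_0 \subseteq L$'' concern the same field), which is exactly what condition 2a) encodes. The degree equality $[\ell_0:k_0] = [\ell_H:k_H]$ in 2b), while not strictly needed to obtain $\sigma(L) \subseteq L$, is what forces the restriction ${\rm{Gal}}(\ell_H/k_H) \to {\rm{Gal}}(\ell_0/k_0)$ to be an isomorphism, and I would expect it to be used to pin down the image and the expected behaviour of ${\rm{res}}^{F/M}_{L/H}$.
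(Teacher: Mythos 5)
Your proof is correct, but it takes a genuinely different route from the paper's. You establish well-definedness directly: writing $L=H\otimes_{k_H}\ell_H$ and using that $\sigma\in{\rm{Gal}}(F/M)$ fixes $H$ pointwise, you reduce to showing $\sigma(\ell_H)\subseteq L$, which you obtain by combining the normality of $\ell_0/k_0$ with $k_0\subseteq k_M$ (so that $\sigma(\ell_0)=\ell_0$ inside the central subfield $\ell_M$ of $F$) with the identity $\ell_H=\ell_0k_H$ extracted from 2b) and 2c); a left-$H$-linear dimension count then upgrades $\sigma|_L$ to an automorphism, hence an element of ${\rm{Gal}}(L/H)$. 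The paper proceeds the other way around: it first constructs a candidate map $g$ as the composite \eqref{resgeneral} of four restriction isomorphisms --- using the same identity $\ell_0k_H=\ell_H$ but also, crucially, the degree equality $[\ell_0:k_0]=[\ell_H:k_H]$ to show that $\ell_0$ and $k_H$ are linearly disjoint over $k_0$ and hence that ${\rm{res}}^{\ell_H/k_H}_{\ell_0/k_0}$ is invertible --- and only then verifies that $g(\sigma)$ agrees with $\sigma$ on $L=H\otimes_{k_H}\ell_0k_H$. Your argument is more economical and correctly isolates the fact (which you note explicitly) that the degree equality is not needed for mere well-definedness. What the paper's construction buys in exchange is the explicit factorization \eqref{resgeneral}, which is precisely what the remark \ref{rk:iso} exploits to decide when ${\rm{res}}^{F/M}_{L/H}$ is an isomorphism, a criterion used repeatedly afterwards (e.g.\ in the proof of the corollary \ref{prop:res_sigma}); with your approach that criterion would require a small additional verification identifying $\sigma|_L$ with the composite.
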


\begin{proof}[Preuve]
On a le diagramme d'extensions de corps suivant :
$${\small{{\xymatrix{& F =M \otimes_{k_M} \ell_M \ar@{-}[rd]  \ar@{-}[ld] \ar@{-}[dd] & & & & L = H \otimes_{k_H} \ell_H  \ar@{-}[ld] \ar@{-}[dd] \\
L = H \otimes_{k_H} \ell_H \ar@{-}[dd] & & \ell_M \ar@{-}[dd]  \ar@{-}[rd] & & \ell_H \ar@{-}[dd]  \ar@{-}[ld] & \\
&M \ar@{-}[ld]  \ar@{-}[rd]& & \ell_0 \ar@{-}[dd]  & &  \ar@{-}[ld] H\\
H && k_M \ar@{-}[rd]& & k_H  \ar@{-}[ld] & \\
& & & k_0 & & \\
}}}}$$
Puisque $\ell_0/k_0$ est galoisienne \`a groupe de Galois fini et a) est v\'erifi\'ee, on peut consid\'erer l'application ${\rm{res}}^{\ell_M/k_M}_{\ell_0/k_0}$. De plus, par b), on a $\ell_0 k_H = \ell_H^{{\rm{Gal}}(\ell_H/\ell_0)} \ell_H^{{\rm{Gal}}(\ell_H/k_H)} = \ell_H^{{\rm{Gal}}(\ell_H/\ell_0) \cap {\rm{Gal}}(\ell_H/k_H)} = \ell_H$, la derni\`ere \'egalit\'e \'etant le c). Par b), on en d\'eduit $[\ell_0 : k_0] = [\ell_0 k_H : k_H]$, c'est-\`a-dire $\ell_0$ et $k_H$ sont lin\'eairement disjoints sur $k_0$. En utilisant \`a nouveau $\ell_0 k_H = \ell_H$, on en d\'eduit que ${\rm{res}}^{\ell_H/k_H}_{\ell_0/k_0}$ est un isomorphisme. En outre, puisque 1) est vraie, les applications $\widetilde{{\rm{res}}}^{F/M}_{\ell_M/k_M}$ et $\widetilde{{\rm{res}}}^{L/H}_{\ell_H/k_H}$ (voir \eqref{restilde}) sont des isomorphismes bien d\'efinis. On peut alors consid\'erer 
\begin{equation} \label{resgeneral}
g= (\widetilde{{\rm{res}}}^{L/H}_{\ell_H/k_H})^{-1} \circ ({\rm{res}}^{\ell_H/k_H}_{\ell_0/k_0})^{-1} \circ {\rm{res}}^{\ell_M/k_M}_{\ell_0/k_0} \circ \widetilde{{\rm{res}}}^{F/M}_{\ell_M/k_M} : {\rm{Gal}}(F/M) \rightarrow {\rm{Gal}}(L/H).
\end{equation}
Alors $g={\rm{res}}^{F/M}_{L/H}$. En effet, par ce qui pr\'ec\`ede, on a $L = H \otimes_{k_H} \ell_H = H \otimes_{k_H}\ell_0 k_H$. Etant donn\'es $\sigma \in {\rm{Gal}}(F/M)$, $x \in H$, $y \in \ell_0$ et $z \in k_H$, on a donc
$$g(\sigma) (x \otimes yz) = x \otimes (({\rm{res}}^{\ell_H/k_H}_{\ell_0/k_0})^{-1} \circ {\rm{res}}^{\ell_M/k_M}_{\ell_0/k_0} \circ \widetilde{{\rm{res}}}^{F/M}_{\ell_M/k_M}(\sigma)(yz)).$$
Mais, puisque $y$ est dans $\ell_0$, on a $({\rm{res}}^{\ell_H/k_H}_{\ell_0/k_0})^{-1} \circ {\rm{res}}^{\ell_M/k_M}_{\ell_0/k_0} \circ \widetilde{{\rm{res}}}^{F/M}_{\ell_M/k_M}(\sigma)(y)=\sigma(y)$
et, puisque $z \in k_H$ et $\sigma$ laisse fixe $H$ point par point, on a
$$({\rm{res}}^{\ell_H/k_H}_{\ell_0/k_0})^{-1} \circ {\rm{res}}^{\ell_M/k_M}_{\ell_0/k_0} \circ \widetilde{{\rm{res}}}^{F/M}_{\ell_M/k_M}(\sigma)(z) = z=\sigma(z),$$
Ainsi $g(\sigma) (x \otimes yz) = x \otimes \sigma(yz) = \sigma(x \otimes yz),$ ce qui conclut la d\'emonstration.
\end{proof}

\begin{rk} \label{rk:iso}
Avec les notations de la proposition, la d\'efinition de ${\rm{res}}^{F/M}_{L/H}$ (voir \eqref{resgeneral}) montre que cette application est un isomorphisme si et seulement s'il en est de m\^eme de ${\rm{res}}^{\ell_M/k_M}_{\ell_0/k_0}$.
\end{rk}

Nous appliquons maintenant notre construction dans quatre situations.

\vspace{2mm}

\noindent
I) Bien entendu, le cas des corps commutatifs est couvert. Plus pr\'ecis\'ement, soient $F/M$ et $L/H$ deux extensions galoisiennes de corps commutatifs \`a groupes de Galois finis et telles que $H \subseteq M$ et $L \subseteq F$. Dans ce cas, on a $F= M \otimes_M F$ et $L= H \otimes_H L$. La proposition \ref{prop:res_res_res} s'applique donc avec $\ell_M=F$, $k_M=M$, $\ell_H=\ell_0=L$ et $k_H=k_0=H$.

\vspace{2mm}

\noindent
II) Soit $F/M$ une extension galoisienne \`a groupe de Galois fini. Supposons $F= M \otimes_k \ell$ pour une certaine extension galoisienne $\ell/k$ de corps commutatifs \`a groupe de Galois fini et telle que $k$ soit contenu dans le centre de $M$. On pose $\ell_M = \ell_0 = \ell_H=L = \ell$ et $k_M = k_0 = k_H=H = k$. Alors l'application ${\rm{res}}^{F/M}_{L/H}$ de la proposition \ref{prop:res_res_res} est un isomorphisme, qui vaut $\widetilde{{\rm{res}}}^{F/M}_{\ell/k}$ (voir \eqref{restilde}). En particulier, si $M$ est de dimension finie sur son centre $m$, alors $F = M \otimes_{m} f$ par le th\'eor\`eme \ref{thm:DL2}, o\`u $f$ est le centre de $F$. Ainsi ${\rm{res}}^{F/M}_{f/m}$ est un isomorphisme.

\vspace{2mm}

\noindent
III) Soient $L/H$ et $F/H$ galoisiennes \`a groupes de Galois finis avec $L \subseteq F$ et $H$ de dimension finie sur son centre $h$. Par le lemme \ref{lem:outer} et le th\'eor\`eme \ref{thm:DL2}, $F/L$ est ext\'erieure. Ainsi le centre $\ell$ de $L$ est un sous-corps du centre $f$ de $F$. De plus, par le th\'eor\`eme \ref{thm:DL2}, on a $F= H \otimes_{h}f$ (resp. $L=H \otimes_{h} \ell$) et $f/h$ (resp. $\ell/h$) est galoisienne finie. En posant $M=H$, $\ell_M=f$, $k_M=h$, $\ell_0=\ell_H=\ell$ et $k_0=k_H=h$, la proposition \ref{prop:res_res_res} montre que ${\rm{res}}^{F/H}_{L/H}$ est bien d\'efinie.

\begin{rk} \label{rk:cohn}
Si $L/H$ et $F/H$ sont deux extensions galoisiennes \`a grou\-pes de Galois finis, telles que $F/H$ soit ext\'erieure et telles que $L \subseteq F$, alors ${\rm{res}}^{F/H}_{L/H}$ est bien d\'efinie (voir \cite{Coh95}).
\end{rk}

\noindent
IV) Nous appliquons enfin la proposition \ref{prop:res_res_res} aux extensions de corps de fractions rationnelles.

\begin{coro} \label{prop:res_sigma}
Soient $H$ un corps de dimension finie sur son centre $h$ et $\sigma \in {\rm{Aut}}(H)$. Soient $L/H$ une extension galoisienne \`a groupe de Galois fini et $\tau$ un automorphisme de $L$ d'ordre fini \'etendant $\sigma$. Notons $\widetilde{\tau}$ la restriction de $\tau$ au centre $\ell$ de $L$ et supposons :
\begin{equation} \label{eq:produit}
\langle \widetilde{\tau}, {\rm{Gal}}(\ell/h) \rangle \cong \langle \widetilde{\tau} \rangle \times {\rm{Gal}}(\ell/h).
\end{equation}
Alors $L(t, \tau) / H(t, \sigma)$ est galoisienne \`a groupe de Galois fini et ${\rm{res}}^{L(t,\tau) / H(t, \sigma)}_{L/H}$ est un isomorphisme bien d\'efini.
\end{coro}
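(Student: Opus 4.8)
The plan is to reduce everything to the general machinery of Proposition~\ref{prop:res_res_res} together with the tensor decomposition of Proposition~\ref{angelot}. Write $\widetilde{\sigma}$ for the restriction of $\sigma$ to $h$; since $\tau$ extends $\sigma$ and $h \subseteq \ell$ (recall that $L/H$ is exterior, so $h \subseteq \ell$ by Theorem~\ref{thm:DL2}), one has $\widetilde{\tau}|_h = \widetilde{\sigma}$. First I would extract condition $(*)$ of Proposition~\ref{angelot} from the hypothesis. Indeed, by Theorem~\ref{thm:DL2} the extension $\ell/h$ is Galois with finite group, so Lemma~\ref{triv_1}.1 applies to $\ell/h$ (with $\widetilde{\sigma}, \widetilde{\tau}$); the direct product hypothesis \eqref{eq:produit} forces ${\rm{Gal}}(\ell/h) \cap \langle \widetilde{\tau} \rangle = \{{\rm{id}}_\ell\}$, hence the order of $\widetilde{\tau}$ equals that of $\widetilde{\sigma}$, which is exactly $(*)$. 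Proposition~\ref{angelot} then yields the key isomorphism $L(t,\tau) \cong H(t,\sigma) \otimes_{h^{\langle\widetilde{\sigma}\rangle}(t^m)} \ell^{\langle\widetilde{\tau}\rangle}(t^m)$, where $m$ is the order of $\tau$ (and where $h^{\langle\widetilde{\sigma}\rangle}(t^m)$ sits in the center of $H(t,\sigma)$, as already used in that proposition).

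The central step is to analyse the commutative extension $\ell^{\langle\widetilde{\tau}\rangle}/h^{\langle\widetilde{\sigma}\rangle}$. Let $\Gamma = \langle \widetilde{\tau}, {\rm{Gal}}(\ell/h) \rangle \subseteq {\rm{Aut}}(\ell)$. By \eqref{eq:produit}, $\Gamma$ is the internal direct product of $\langle \widetilde{\tau} \rangle$ and ${\rm{Gal}}(\ell/h)$; in particular it is finite, and its fixed field is $\ell^\Gamma = \ell^{\langle\widetilde{\tau}\rangle} \cap h = h^{\langle\widetilde{\sigma}\rangle}$ (using $\widetilde{\tau}|_h = \widetilde{\sigma}$). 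By Artin's theorem, $\ell/h^{\langle\widetilde{\sigma}\rangle}$ is Galois with group $\Gamma$. Since $\langle\widetilde{\tau}\rangle$ is a direct factor of $\Gamma$, it is normal, so its fixed field $\ell^{\langle\widetilde{\tau}\rangle}$ is Galois over $h^{\langle\widetilde{\sigma}\rangle}$ with group $\Gamma/\langle\widetilde{\tau}\rangle \cong {\rm{Gal}}(\ell/h)$; comparing orders gives $[\ell^{\langle\widetilde{\tau}\rangle}:h^{\langle\widetilde{\sigma}\rangle}] = [\ell:h]$. Passing to rational function fields, $\ell^{\langle\widetilde{\tau}\rangle}(t^m)/h^{\langle\widetilde{\sigma}\rangle}(t^m)$ is then Galois with finite group, and restriction to the constants $\ell^{\langle\widetilde{\tau}\rangle}$ is an isomorphism of Galois groups. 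I expect this to be the main obstacle: it is precisely here that the full strength of the direct product \eqref{eq:produit} (rather than merely the semidirect product afforded by $(*)$) is needed, the normality of $\langle\widetilde{\tau}\rangle$ yielding the Galois property of $\ell^{\langle\widetilde{\tau}\rangle}/h^{\langle\widetilde{\sigma}\rangle}$ and the trivial intersection yielding the disjointness condition used below.

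To obtain that $L(t,\tau)/H(t,\sigma)$ is Galois with finite group, I would apply the construction preceding \eqref{restilde} to the skew field $H(t,\sigma)$, its central subfield $h^{\langle\widetilde{\sigma}\rangle}(t^m)$, and the finite Galois extension $\ell^{\langle\widetilde{\tau}\rangle}(t^m)/h^{\langle\widetilde{\sigma}\rangle}(t^m)$: since the tensor product $H(t,\sigma) \otimes_{h^{\langle\widetilde{\sigma}\rangle}(t^m)} \ell^{\langle\widetilde{\tau}\rangle}(t^m)$ is the field $L(t,\tau)$ by the first paragraph, the extension $L(t,\tau)/H(t,\sigma)$ is Galois and $\widetilde{{\rm{res}}}$ maps its group isomorphically onto the finite group ${\rm{Gal}}(\ell^{\langle\widetilde{\tau}\rangle}(t^m)/h^{\langle\widetilde{\sigma}\rangle}(t^m))$. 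Finally I would invoke Proposition~\ref{prop:res_res_res} with $F = L(t,\tau)$, $M = H(t,\sigma)$, inner datum $L/H$, and the choices $\ell_M = \ell^{\langle\widetilde{\tau}\rangle}(t^m)$, $k_M = h^{\langle\widetilde{\sigma}\rangle}(t^m)$, $\ell_H = \ell$, $k_H = h$, $\ell_0 = \ell^{\langle\widetilde{\tau}\rangle}$, $k_0 = h^{\langle\widetilde{\sigma}\rangle}$. Hypothesis 1) holds by Proposition~\ref{angelot} and Theorem~\ref{thm:DL2}, while hypotheses 2a)--2c) are exactly the facts assembled in the second paragraph, condition 2c) being ${\rm{Gal}}(\ell/\ell^{\langle\widetilde{\tau}\rangle}) \cap {\rm{Gal}}(\ell/h) = \langle\widetilde{\tau}\rangle \cap {\rm{Gal}}(\ell/h) = \{{\rm{id}}_\ell\}$. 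This makes ${\rm{res}}^{L(t,\tau)/H(t,\sigma)}_{L/H}$ well-defined; and by Remark~\ref{rk:iso} it is an isomorphism if and only if ${\rm{res}}^{\ell^{\langle\widetilde{\tau}\rangle}(t^m)/h^{\langle\widetilde{\sigma}\rangle}(t^m)}_{\ell^{\langle\widetilde{\tau}\rangle}/h^{\langle\widetilde{\sigma}\rangle}}$ is, which was checked in the second paragraph.
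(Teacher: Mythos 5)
Your proposal is correct and follows essentially the same route as the paper's proof: the tensor decomposition of Proposition~\ref{angelot}, the mechanism of \eqref{restilde} to get that $L(t,\tau)/H(t,\sigma)$ is Galois with finite group, and then Proposition~\ref{prop:res_res_res} with exactly the same choice of data, concluding via Remark~\ref{rk:iso}. The only cosmetic difference is that you re-derive by hand (via Artin's theorem and the normality of $\langle\widetilde{\tau}\rangle$ in $\Gamma$) the fact that $\ell^{\langle\widetilde{\tau}\rangle}/h^{\langle\widetilde{\sigma}\rangle}$ is Galois of degree $[\ell:h]$, which the paper obtains by citing Lemme~\ref{triv_2}.
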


Nous aurons besoin du lemme suivant, qui sera r\'eutilis\'e dans la suite :

\begin{lemme} \label{triv_2}
Soient $H$ un corps de dimension finie sur son centre et $L/H$ une extension galoisienne \`a groupe de Galois fini. Soit $h$ (resp. $\ell$) le centre de $H$ (resp. de $L$), soit $\sigma \in {\rm{Aut}}(H)$ et soit $\tau$ un automorphisme de $L$ d'ordre fini prolongeant $\sigma$. Soit $\widetilde{\sigma}$ (resp. $\widetilde{\tau}$) la restriction de $\sigma$ (resp. de $\tau$) \`a $h$ (resp. \`a $\ell$). Les conditions suivantes sont \'equivalentes :

\vspace{0.5mm}

\noindent
{\rm{i)}} $\langle \widetilde{\tau}, {\rm{Gal}}(\ell/h) \rangle = {\rm{Gal}}(\ell/h) \times \langle \widetilde{\tau} \rangle$,

\vspace{0.5mm}

\noindent
{\rm{ii)}} l'ordre de $\widetilde{\tau}$ vaut l'ordre de $\widetilde{\sigma}$ et l'extension $\ell^{\langle \widetilde{\tau} \rangle} / h^{\langle \widetilde{\sigma} \rangle}$ est galoisienne.
\end{lemme}

\begin{proof}[Preuve du lemme \ref{triv_2}]
Par le th\'eor\`eme \ref{thm:DL2}, $\ell$ est une extension galoisienne de $h$. De plus, puisque $\tau$ prolonge $\sigma$, on a $h^{\langle \widetilde{\sigma} \rangle} \subseteq \ell^{\langle \widetilde{\tau} \rangle}$. Par cons\'equent, les conditions i) et ii) sont bien d\'efinies. De plus, en vertu du lemme \ref{triv_1}, on peut supposer que $\langle \widetilde{\tau}, {\rm{Gal}}(\ell/h) \rangle$ vaut ${\rm{Gal}}(\ell/h) \rtimes \langle \widetilde{\tau} \rangle$ et se contenter de montrer que $\langle \widetilde{\tau} \rangle \trianglelefteq \langle \widetilde{\tau}, {\rm{Gal}}(\ell/h) \rangle$ si et seulement si $\ell^{\langle \widetilde{\tau} \rangle} / h^{\langle \widetilde{\sigma} \rangle}$ est galoisienne. Pour cela, notons que, puisque $\ell/h$ est galoisienne, on a $h = \ell^{{\rm{Gal}}(\ell/h)}$ et donc $h^{\langle \widetilde{\sigma} \rangle} = \ell^{\langle \widetilde{\tau}, {\rm{Gal}}(\ell/h) \rangle}$. Comme $\langle \widetilde{\tau}, {\rm{Gal}}(\ell/h) \rangle$ est fini, le lemme d'Artin montre que $\ell / h^{\langle \widetilde{\sigma} \rangle}$ est galoisienne et ${\rm{Gal}}(\ell / h^{\langle \widetilde{\sigma} \rangle}) = \langle \widetilde{\tau}, {\rm{Gal}}(\ell/h) \rangle$. Ainsi $\ell^{\langle \widetilde{\tau} \rangle} / h^{\langle \widetilde{\sigma} \rangle}$ est galoisienne si et seulement si ${\rm{Gal}}(\ell/\ell^{\langle \widetilde{\tau} \rangle}) \trianglelefteq {\rm{Gal}}(\ell / h^{\langle \widetilde{\sigma} \rangle})$, c'est-\`a-dire si et seulement si $\langle \widetilde{\tau} \rangle \trianglelefteq \langle \widetilde{\tau}, {\rm{Gal}}(\ell/h) \rangle$.
\end{proof}

\begin{proof}[Preuve du corollaire \ref{prop:res_sigma}]
Notons $\widetilde{\sigma}$ la restriction de $\sigma$ \`a $h$. Par \eqref{eq:produit} et le lemme \ref{triv_2}, l'ordre de $\widetilde{\tau}$ vaut l'ordre de $\widetilde{\sigma}$ et $\ell^{\langle \widetilde{\tau} \rangle} / h^{\langle \widetilde{\sigma} \rangle}$ est galoisienne de degr\'e $[\ell :h]$. On peut donc appliquer la proposition \ref{angelot} et conclure que $L(t, \tau) = H(t, \sigma) \otimes _{h^{\langle \widetilde{\sigma} \rangle}(t^m)} \ell^{\langle \widetilde{\tau} \rangle}(t^m)$, o\`u $m$ est l'ordre de $\tau$. De plus, $h^{\langle \widetilde{\sigma} \rangle}(t^m)$ est contenu dans le centre de $H(t, \sigma)$ et $\ell^{\langle \widetilde{\tau} \rangle}(t^m) / h^{\langle \widetilde{\sigma} \rangle}(t^m)$ est galoisienne \`a groupe de Galois fini. Par le \S\ref{ssec:prelim_2}, $L(t, \tau)/H(t, \sigma)$ est galoisienne \`a groupe de Galois fini.

Montrons maintenant que ${\rm{res}}^{L(t,\tau) / H(t, \sigma)}_{L/H}$ est un isomorphisme bien d\'efini. On applique pour cela la proposition \ref{prop:res_res_res} avec $F=L(t, \tau)$,  $M=H(t, \sigma)$, $\ell_M = \ell^{\langle \widetilde{\tau} \rangle}(t^m)$, $k_M= h^{\langle \widetilde{\sigma} \rangle}(t^m)$, $\ell_0 = \ell^{\langle \widetilde{\tau} \rangle}$, $k_0= h^{\langle \widetilde{\sigma} \rangle}$, $\ell_H=\ell$ et $k_H=h$. L'extension $\ell_0/k_0 = \ell^{\langle \widetilde{\tau} \rangle} / h^{\langle \widetilde{\sigma} \rangle}$ est galoisienne finie et, par \eqref{eq:produit}, $\ell_H / k_0 = \ell/ h^{\langle \widetilde{\sigma} \rangle} = \ell / \ell^{\langle \widetilde{\tau}, {\rm{Gal}}(\ell/h) \rangle}$ l'est aussi. De plus, $[\ell_0 : k_0] = [\ell^{\langle \widetilde{\tau} \rangle} : h^{\langle \widetilde{\sigma} \rangle}] = [\ell :h] = [\ell_H : k_H]$ et ${\rm{Gal}}(\ell_H / \ell_0) \cap {\rm{Gal}}(\ell_H / k_H) = {\rm{Gal}}(\ell / \ell^{\langle \widetilde{\tau} \rangle}) \cap {\rm{Gal}}(\ell / h) = \{{\rm{id}}_\ell\} = \{{\rm{id}}_{\ell_H}\}$ par \eqref{eq:produit}. Ainsi ${\rm{res}}^{L(t,\tau) / H(t, \sigma)}_{L/H}$ est bien d\'efinie et, par la remarque \ref{rk:iso}, c'est un isomorphisme.
\end{proof}

\begin{rk}
Le fait que ${\rm{res}}^{L(t,\tau) / H(t, \sigma)}_{L/H}$ soit un isomorphisme bien d\'efini admet la preuve plus directe suivante, qui nous a \'et\'e en partie communiqu\'ee par le rapporteur.

Fixons tout d'abord $\rho \in {\rm{Gal}}(L(t, \tau)/H(t, \sigma))$. Alors $\rho(L) \subseteq L$. En effet, notons que $\rho$ induit un automorphisme $\widetilde{\rho}$ du centre $Z(L(t, \tau))$ de $L(t, \tau)$, qui est en fait un \'el\'ement de ${\rm{Aut}}(Z(L(t, \tau))/h^{\langle \widetilde{\sigma} \rangle}(t^m))$, o\`u $m$ est l'ordre de $\tau$. Or $\ell^{\langle \widetilde{\tau} \rangle} / h^{\langle \widetilde{\sigma} \rangle}$ est galoisienne (\`a groupe de Galois fini) par \eqref{eq:produit} et le lemme \ref{triv_2}. Par cons\'equent, $\ell^{\langle \widetilde{\tau} \rangle}(t^m) / h^{\langle \widetilde{\sigma} \rangle}(t^m)$ l'est aussi et, puisque $\ell^{\langle \widetilde{\tau} \rangle}(t^m) \subseteq Z(L(t, \tau))$, on a $\widetilde{\rho}(\ell^{\langle \widetilde{\tau} \rangle}(t^m)) = \ell^{\langle \widetilde{\tau} \rangle}(t^m)$. En particulier, pour $\alpha \in \ell^{\langle \widetilde{\tau} \rangle}$, l'\'el\'ement $\widetilde{\rho}(\alpha)$ de $\ell^{\langle \widetilde{\tau} \rangle}(t^m)$ est alg\'ebrique sur $h^{\langle \widetilde{\sigma} \rangle}$; il est donc dans $\ell^{\langle \widetilde{\tau} \rangle}$. On a donc $\rho(\ell^{\langle \widetilde{\tau} \rangle})  \subseteq \ell^{\langle \widetilde{\tau} \rangle}$. Or $\ell = \ell^{\langle \widetilde{\tau} \rangle} h$ par le lemme \ref{triv_1}, \eqref{eq:produit} et le lemme \ref{triv_2}. Par cons\'equent, on a $\rho(\ell) \subseteq \ell$. Enfin, en utilisant que l'application $x \otimes y \in H \otimes_h \ell \mapsto xy \in L$ est un isomorphisme (voir th\'eor\`eme \ref{thm:DL2}), on en d\'eduit $\rho(L) \subseteq L$, comme annonc\'e. En particulier, l'application de restriction ${\rm{res}}^{L(t,\tau) / H(t, \sigma)}_{L/H}$ est bien d\'efinie.

Ensuite, ${\rm{res}}^{L(t,\tau) / H(t, \sigma)}_{L/H}$ est injective. En effet, si $\rho \in {\rm{Gal}}(L(t, \tau)/H(t, \sigma))$ vaut l'identit\'e sur $L$, alors $\rho$ vaut l'identit\'e sur $L[t, \tau]$ (puisque $\rho(t)=t$). Comme tout \'el\'ement de $L(t, \tau)$ s'\'ecrit sous la forme $ab^{-1}$ avec $a \in L[t, \tau]$ et $b \in L[t, \tau] \setminus \{0\}$, on en d\'eduit $\rho = {\rm{id}}_{L(t, \tau)}$.

Il reste \`a montrer que ${\rm{res}}^{L(t,\tau) / H(t, \sigma)}_{L/H}$ est surjective. Notons tout d'abord que \eqref{eq:produit} entra\^ine
\begin{equation} \label{eq:angelot}
\langle \tau, {\rm{Gal}}(L/H) \rangle \cong \langle \tau \rangle \times {\rm{Gal}}(L/H).
\end{equation}
En effet, par le lemme \ref{triv_1} et \eqref{eq:produit}, les ordres de $\widetilde{\sigma}$ et $\widetilde{\tau}$ sont \'egaux. Comme vu dans le 1) de la remarque \ref{rk:bruno}, cela entra\^ine que les ordres de $\sigma$ et $\tau$ sont \'egaux. Une nouvelle utilisation du lemme \ref{triv_1} m\`ene alors \`a $\langle \tau, {\rm{Gal}}(L/H) \rangle \cong {\rm{Gal}}(L/H) \rtimes \langle \tau \rangle$. Ainsi, pour \'etablir \eqref{eq:angelot}, il ne reste plus qu'\`a montrer que $\langle \tau \rangle$ est normal dans $\langle \tau, {\rm{Gal}}(L/H) \rangle$. Fixons pour cela $\rho \in {\rm{Gal}}(L/H)$ et $x \in L$. Par le th\'eor\`eme \ref{thm:DL2}, il existe $n \geq 1$, $y_1, \dots, y_n \in H$ et $z_1, \dots, z_n \in \ell$ tels que $x = y_1 z_1 + \cdots + y_n z_n$. Ainsi 
$$\rho \tau \rho^{-1} (x) = \rho \tau \rho^{-1}(y_1) \rho \tau \rho^{-1}(z_1) + \cdots + \rho \tau \rho^{-1}(y_n) \rho \tau \rho^{-1}(z_n).$$
Fixons $i \in \{1, \dots, n\}$. Puisque $\rho$ fixe $H$ point par point, $y_i \in H$ et $\tau(H) \subseteq H$, on a $\rho \tau \rho^{-1}(y_i) = \tau(y_i)$. Ensuite, par \eqref{eq:produit}, en posant $\widetilde{\rho} = {\rm{res}}^{L/H}_{\ell/h}(\rho)$, on a $\widetilde{\rho} \widetilde{\tau} \widetilde{\rho}^{-1} = \widetilde{\tau}$ \footnote{En effet, $\widetilde{\rho} \widetilde{\tau} \widetilde{\rho}^{-1} \widetilde{\tau}^{-1}$ est \`a la fois dans $\langle \widetilde{\tau} \rangle$ (puisque $\langle \widetilde{\tau} \rangle \trianglelefteq \langle \widetilde{\tau}, {\rm{Gal}}(\ell/h) \rangle$) et dans ${\rm{Gal}}(\ell/h)$ (puisque ${\rm{Gal}}(\ell/h) \trianglelefteq \langle \widetilde{\tau}, {\rm{Gal}}(\ell/h) \rangle$. Comme $\langle \widetilde{\tau} \rangle \cap {\rm{Gal}}(\ell/h) = \{{\rm{id}}_\ell\}$, on a bien $\widetilde{\rho} \widetilde{\tau} = \widetilde{\tau} \widetilde{\rho}$.}. En particulier, puisque $z_i \in \ell$, on a $\rho \tau \rho^{-1} (z_i) = \tau(z_i)$. Ainsi $\rho \tau \rho^{-1}(x) = \tau(y_1) \tau(z_1) + \cdots + \tau(y_n) \tau(z_n) = \tau (x)$.

Fixons enfin $\rho \in {\rm{Gal}}(L/H)$. Par \eqref{eq:angelot}, $\tau$ et $\rho$ commutent et, ainsi, l'application 
$$\widetilde{\rho} : \left \{ \begin{array} {ccc}
L[t, \tau] & \longrightarrow & L[t, \tau] \\
a_0 + a_1 t+ \cdots + a_n t^n & \longmapsto & \rho(a_1) t + \cdots + \rho(a_n) t^n
\end{array} \right. $$
est un isomorphisme (voir, par exemple, \cite[Proposition 2.4]{GW04} pour plus de d\'etails), qui prolonge $\rho$ et qui fixe chaque \'el\'ement de $H[t, \sigma]$. Maintenant, pour $a, b, c, d \in L[t, \tau]$ v\'erifiant $b d \not=0$ et $ab^{-1} = c d^{-1}$, on a $\widetilde{\rho}(a) \widetilde{\rho}(b)^{-1} = \widetilde{\rho}(c) \widetilde{\rho}(d)^{-1}$. En effet, puisque $L[t, \tau]$ est un anneau de Ore, il existe $e, f$ dans $L[t, \tau] \setminus \{0\}$ tels que $b e = d f$. On a donc $ae = ab^{-1} b e = cd^{-1} df = cf$ et $\widetilde{\rho}(a) \widetilde{\rho}(b)^{-1} = \widetilde{\rho}(a) \widetilde{\rho}(e) \widetilde{\rho}(f)^{-1} \widetilde{\rho}(d)^{-1} = \widetilde{\rho}(c) \widetilde{\rho}(f) \widetilde{\rho}(f)^{-1} \widetilde{\rho}(d)^{-1} = \widetilde{\rho}(c) \widetilde{\rho}(d)^{-1},$ comme annonc\'e. Puisque tout \'el\'ement de $L(t, \tau)$ s'\'ecrit sous la forme $a b^{-1}$ avec $a \in L[t, \tau]$ et $b \in L[t, \tau] \setminus \{0\}$, l'application suivante est bien d\'efinie :
$$\widehat{\rho} : \left \{ \begin{array} {ccc}
L(t, \tau) & \longrightarrow & L(t, \tau) \\
ab^{-1} & \longmapsto & \widetilde{\rho}(a) \widetilde{\rho}(b)^{-1}
\end{array} \right.. $$
Clairement, $\widehat{\rho}$ prolonge $\widetilde{\rho}$, fixe chaque \'el\'ement de $H(t, \sigma)$ et est surjective. De plus, $\widehat{\rho}$ est un morphisme (voir, par exemple, \cite[Proposition 6.3]{GW04} pour plus de d\'etails), qui est n\'ecessairement injectif. Ainsi $\widehat{\rho} \in {\rm{Gal}}(L(t, \tau) / H(t, \sigma))$ et ${\rm{res}}^{L(t,\tau) / H(t, \sigma)}_{L/H} (\widehat{\rho}) = \rho$.
\end{rk}

\begin{rk} \label{rk:special_cases}
Pour un corps $H$ de dimension finie sur son centre, un automorphisme $\sigma$ de $H$, une extension galoisienne $L/H$ \`a groupe de Galois fini et un automorphisme $\tau$ de $L$ d'ordre fini prolongeant $\sigma$, la condition \eqref{eq:produit} est trivialement satisfaite si $\tau={\rm{id}}_L$. 

Nous exhibons maintenant trois situations moins imm\'ediates dans lesquelles la condition \eqref{eq:produit} vaut\footnote{Bien entendu, cette condition est aussi trivialement satisfaite si $L=H$ (et donc $\tau= \sigma$). Cependant, dans cette situation, la conclusion du corollaire \ref{prop:res_sigma} est sans int\'er\^et.}.

\vspace{1mm}

\noindent
1) Soit $H$ un corps de dimension finie sur son centre $h$, soit $L/H$ une extension galoisienne \`a groupe de Galois fini et soit $y \in H \setminus \{0\}$ pour lequel il existe $n \geq 1$ tel que $y^n \in h$. Clairement, le corollaire \ref{prop:res_sigma} s'applique si $\sigma = I_H(y)$ et $\tau=I_L(y)$.

\vspace{1mm}

\noindent
2) Soient $m$ et $n$ deux entiers naturels non nuls tels que $n \equiv 1 \pmod m$. Pour une ind\'etermin\'ee $u$, le corps $L= \Cc((u^{1/n}))$ est une extension galoisienne de $H=\Cc((u))$ de groupe $\Zz/n\Zz$. Si $\sigma$ est l'automorphisme de $H$ d'ordre $m$ d\'efini par $u \mapsto e^{2 i \pi /m} u$, on consid\`ere l'automorphisme $\tau$ de $L$ d\'efini par $u^{1/n} \mapsto e^{2 i \pi /m} u^{1/n}$. Alors $\tau$ est d'ordre $m$ et, puisque $n \equiv 1 \pmod m$, la restriction de $\tau$ \`a $H$ vaut $\sigma$. De plus, \eqref{eq:produit} est vraie. 

\vspace{1mm}

\noindent
3) Soit $K$ un corps de dimension finie sur son centre $k$. Soient $n \geq 2$ un entier et $G$ un groupe fini non trivial tels qu'il existe une extension galoisienne $L/K$ de groupe $\Zz/n\Zz \times G$. Par le th\'eor\`eme \ref{thm:DL2}, si $\ell$ est le centre de $L$, on a :

$${\small{{\xymatrix{ & \ell\ar@{-}[rd] \ar@{-} @/_1pc/[ld]_{G} \ar@{-}[ld] & \\
\ell^{\{0\} \times G} \ar@{-}[rd] \ar@{-} @/_1pc/[rd]_{\Zz/n\Zz} & & \ell^{\Zz/n\Zz \times \{1\}} \ar@{-}[ld] \\
& k & \\
}}}}$$
De plus, $H= K \otimes_{k} \ell^{\{0\} \times G}$ est un corps et $H/K$ est galoisienne de groupe $\Zz/n\Zz$. Soient $\widetilde{\sigma}$ un g\'en\'erateur de ${\rm{Gal}}(\ell^{\{0\} \times G}/k)$ et $\sigma$ un g\'en\'erateur de ${\rm{Gal}}(H/K)$ tel que ${\rm{res}}^{H/K}_{\ell^{\{0\} \times G}/k}(\sigma)=\widetilde{\sigma}$. En outre, $\ell^{\{0\} \times G}$ et $\ell^{\Zz/n\Zz \times \{1\}}$ sont lin\'eairement disjoints sur $k$ et leur compositum vaut $\ell$. Ainsi il existe $\widetilde{\tau} \in {\rm{Gal}}(\ell/k)$ d'ordre $n$ et tel que ${\rm{res}}^{\ell/k}_{\ell^{\{0\} \times G}/k}(\widetilde{\tau})=\widetilde{\sigma}$. Soit $\tau \in {\rm{Gal}}(L/K)$ l'ant\'ec\'edent de $\widetilde{\tau}$ sous ${\rm{res}}^{L/K}_{\ell/k}$. Alors $\tau$ \'etend $\sigma$. Enfin, $L/H$ est galoisienne de groupe $G$ (voir th\'eor\`eme \ref{thm:DL2}) et, par construction, on a $\langle \widetilde{\tau}, {\rm{Gal}}(\ell/\ell^{\{0\} \times G}) \rangle \cong \langle \widetilde{\tau} \rangle \times {\rm{Gal}}(\ell/\ell^{\{0\} \times G}).$
\end{rk}

Le corollaire \ref{prop:res_sigma} admet la r\'eciproque partielle suivante :

\begin{prop} \label{prop:converse}
Soient $H$ un corps de dimension finie sur son centre $h$ et $\sigma \in {\rm{Aut}}(H)$. Soient $L/H$ une extension galoisienne \`a groupe de Galois fini et $\tau$ un automorphisme de $L$ d'ordre fini \'etendant $\sigma$. Notons $\widetilde{\tau}$ la restriction de $\tau$ au centre $\ell$ de $L$. Supposons :

\vspace{0.5mm}

\noindent
{\rm{1)}} $L(t, \tau)/H(t, \sigma)$ est galoisienne \`a groupe de Galois fini,

\vspace{0.5mm}

\noindent
{\rm{2)}} l'ordre int\'erieur de $\sigma$ (resp. de $\tau$) vaut l'ordre de $\sigma$ (resp. de $\tau$).

\vspace{0.5mm}

\noindent
Alors $\langle \widetilde{\tau}, {\rm{Gal}}(\ell/h) \rangle \cong \langle \widetilde{\tau} \rangle \times {\rm{Gal}}(\ell/h).$
\end{prop}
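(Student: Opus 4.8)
L'id\'ee est de ramener la conclusion au lemme \ref{triv_2}, dont je vais v\'erifier la condition ii). D'apr\`es ce lemme, la d\'ecomposition recherch\'ee \'equivaut \`a la conjonction de deux faits : que $\widetilde{\tau}$ et $\widetilde{\sigma}$ ont le m\^eme ordre, et que l'extension $\ell^{\langle \widetilde{\tau} \rangle}/h^{\langle \widetilde{\sigma} \rangle}$ est galoisienne. J'obtiendrai ces deux faits \`a partir des hypoth\`eses en passant aux centres des corps de fractions rationnelles.

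Je commence par calculer ces centres. Notons $m_\sigma$ (resp. $m_\tau$) l'ordre de $\sigma$ (resp. de $\tau$). Puisque $H$ (resp. $L$, par le th\'eor\`eme \ref{thm:DL2}) est de dimension finie sur son centre et que $\sigma$ (resp. $\tau$) est d'ordre fini, la partie 1) du lemme \ref{lemma:easy} assure que $H(t,\sigma)$ et $L(t,\tau)$ sont de dimension finie sur leurs centres ; de plus, l'hypoth\`ese 2), qui dit pr\'ecis\'ement que les ordres int\'erieurs de $\sigma$ et de $\tau$ valent $m_\sigma$ et $m_\tau$, permet d'appliquer la partie 2) du lemme \ref{lemma:easy} et d'obtenir $Z(H(t,\sigma)) = h^{\langle \widetilde{\sigma} \rangle}(t^{m_\sigma})$ et $Z(L(t,\tau)) = \ell^{\langle \widetilde{\tau} \rangle}(t^{m_\tau})$.

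J'invoque ensuite l'hypoth\`ese 1) : comme $L(t,\tau)/H(t,\sigma)$ est galoisienne \`a groupe de Galois fini et que $H(t,\sigma)$ est de dimension finie sur son centre, le th\'eor\`eme \ref{thm:DL2} (1.a)) montre que $Z(L(t,\tau))$ est une extension commutative (galoisienne finie) de $Z(H(t,\sigma))$ ; en particulier $t^{m_\sigma} \in \ell^{\langle \widetilde{\tau} \rangle}(t^{m_\tau})$. En \'ecrivant $m_\sigma = q\, m_\tau + r$ avec $0 \leq r < m_\tau$ et en utilisant que $\{1, t, \dots, t^{m_\tau - 1}\}$ est une base de l'extension commutative $\ell^{\langle \widetilde{\tau} \rangle}(t)/\ell^{\langle \widetilde{\tau} \rangle}(t^{m_\tau})$ (noter que $t$ commute \`a $\ell^{\langle \widetilde{\tau} \rangle}$, puisque $\widetilde{\tau}$ le fixe), on en d\'eduit $r=0$, c'est-\`a-dire $m_\tau \mid m_\sigma$. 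Comme $\tau$ prolonge $\sigma$, on a aussi $m_\sigma \mid m_\tau$, d'o\`u $m_\sigma = m_\tau =: m$.

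Les deux faits en d\'ecoulent alors. Pour les ordres : d'apr\`es la remarque sur le th\'eor\`eme de Skolem--Noether du \S\ref{ssec:prelim_1}, l'ordre int\'erieur de $\sigma$ (resp. de $\tau$) vaut l'ordre de $\widetilde{\sigma}$ (resp. de $\widetilde{\tau}$) ; combin\'e \`a l'hypoth\`ese 2) et \`a $m_\sigma = m_\tau$, ceci donne que $\widetilde{\sigma}$ et $\widetilde{\tau}$ sont tous deux d'ordre $m$. Pour la galoisianit\'e : ce qui pr\'ec\`ede fournit que $\ell^{\langle \widetilde{\tau} \rangle}(t^m)/h^{\langle \widetilde{\sigma} \rangle}(t^m)$ est galoisienne, tandis que $\ell^{\langle \widetilde{\tau} \rangle}/h^{\langle \widetilde{\sigma} \rangle}$ est finie (sous-extension de l'extension finie $\ell/h$ du th\'eor\`eme \ref{thm:DL2}). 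Le point d\'elicat, et l'\'etape qui me semble demander le plus de soin, est la descente de la galoisianit\'e de $\ell^{\langle \widetilde{\tau} \rangle}(t^m)/h^{\langle \widetilde{\sigma} \rangle}(t^m)$ \`a $\ell^{\langle \widetilde{\tau} \rangle}/h^{\langle \widetilde{\sigma} \rangle}$ : puisque l'adjonction de la transcendante $t^m$ ne modifie pas les polyn\^omes minimaux des \'el\'ements alg\'ebriques et que $\ell^{\langle \widetilde{\tau} \rangle}$ est alg\'ebriquement clos dans $\ell^{\langle \widetilde{\tau} \rangle}(t^m)$, la s\'eparabilit\'e et la normalit\'e se transmettent vers le bas. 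Ayant ainsi v\'erifi\'e la condition ii) du lemme \ref{triv_2}, j'en d\'eduis que la condition i) est satisfaite, c'est-\`a-dire que le produit direct interne $\langle \widetilde{\tau}, {\rm{Gal}}(\ell/h) \rangle = {\rm{Gal}}(\ell/h) \times \langle \widetilde{\tau} \rangle$ a lieu, ce qui est exactement la d\'ecomposition annonc\'ee.
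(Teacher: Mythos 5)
Votre preuve est correcte et suit essentiellement le m\^eme chemin que celle de l'article : calcul des centres de $H(t,\sigma)$ et $L(t,\tau)$ via le lemme \ref{lemma:easy} et l'hypoth\`ese 2), \'egalit\'e des ordres de $\sigma$ et $\tau$ tir\'ee de l'appartenance $t^{m_\sigma} \in \ell^{\langle \widetilde{\tau} \rangle}(t^{m_\tau})$ et du fait que $\tau$ prolonge $\sigma$, descente de la galoisianit\'e de $\ell^{\langle \widetilde{\tau} \rangle}(t^m)/h^{\langle \widetilde{\sigma} \rangle}(t^m)$ \`a $\ell^{\langle \widetilde{\tau} \rangle}/h^{\langle \widetilde{\sigma} \rangle}$, puis conclusion par le lemme \ref{triv_2}. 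Vous explicitez seulement deux points laiss\'es implicites dans l'article (la divisibilit\'e $m_\sigma \mid m_\tau$ et la descente de la galoisianit\'e), ce qui est bienvenu mais ne change pas l'argument.
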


\begin{proof}[Preuve]
Soit $\widetilde{\sigma}$ la restriction de $\sigma$ \`a $h$. Par 2) et le lemme \ref{lemma:easy}, les centres de $H(t, \sigma)$ et $L(t, \tau)$ sont $h^{\langle \widetilde{\sigma} \rangle}(t^m)$ et $\ell^{\langle \widetilde{\tau} \rangle }(t^n)$, o\`u $m$ (resp. $n$) est l'ordre de $\sigma$ (resp. de $\tau$). Comme $H(t, \sigma)$ est de dimension finie sur son centre (voir lemme \ref{lemma:easy}), le th\'eor\`eme \ref{thm:DL2} et 1) montrent que $\ell^{\langle \widetilde{\tau} \rangle }(t^n)/h^{\langle \widetilde{\sigma} \rangle }(t^m)$ est galoisienne. De plus, comme $t^m \in \ell^{\langle \widetilde{\tau} \rangle}(t^n)$, on a $n \, | \,  m$ et donc $n=m$. Ainsi $\ell^{\langle \widetilde{\tau} \rangle}/h^{\langle \widetilde{\sigma} \rangle}$ est galoisienne. En outre, comme $n=m$, le th\'eor\`eme de Skolem--Noether et 2) montrent que les ordres de $\widetilde{\sigma}$ et $\widetilde{\tau}$ sont \'egaux. Le lemme \ref{triv_2} permet alors de conclure.
\end{proof}

\subsection{Probl\`emes de plongement finis} \label{ssec:termi_2}

\subsubsection{Terminologie} \label{sssec:fep_1}

Soit $H$ un corps de dimension finie sur son centre. Un {\it{probl\`eme de plongement fini}} sur $H$ est un \'epimorphisme $\alpha : G \rightarrow {\rm{Gal}}(L/H)$, o\`u $G$ est un groupe fini et $L/H$ une extension galoisienne. On dit que $\alpha$ est {\it{scind\'e}} s'il existe un plongement $\alpha' : {\rm{Gal}}(L/H) \rightarrow G$ tel que $\alpha \circ \alpha' = {\rm{id}}_{{\rm{Gal}}(L/H)}$. Une {\it{solution faible}} \`a $\alpha$ est un monomorphisme $\beta : {\rm{Gal}}(F/H) \rightarrow G$, o\`u $F/H$ est une extension galoisienne v\'erifiant $L \subseteq F$, tel que l'application compos\'ee $\alpha \circ \beta$ soit l'application de restriction ${\rm{res}}^{F/H}_{L/H}$ du III) du \S\ref{ssec:termi_1}. Si $\beta$ est un isomorphisme, on dit {\it{solution}} plut\^ot que solution faible.

\begin{rk} \label{rk:fep}
1) Plus g\'en\'eralement, on peut d\'efinir un probl\`eme de plongement fini sur un corps quelconque $H$ (pas n\'ecessairement de dimension finie sur son centre). Cependant, pour la notion de solution faible, nous avons besoin que ${\rm{res}}^{F/H}_{L/H}$ soit bien d\'efinie. Comme vu dans la remarque \ref{rk:cohn}, c'est le cas sans aucune hypoth\`ese sur $H$, \`a condition de supposer $F/H$ ext\'erieure. Comme la plupart de nos r\'esultats supposent $H$ de dimension finie sur son centre, nous nous restreignons \`a cette situation dans la terminologie ci-dessus.

\vspace{1mm}

\noindent
2) Soient $H$ un corps de dimension finie sur son centre et $\alpha : G \rightarrow {\rm{Gal}}(L/H)$ un probl\`eme de plongement fini sur $H$. Puisque $G$ est fini, $L/H$ est ext\'erieure (voir th\'eor\`eme \ref{thm:DL2}) et, ainsi, le lemme \ref{lem:outer} s'applique : si $H$ est commutatif, alors $L$ l'est aussi. De m\^eme, si ${\rm{Gal}}(F/H) \rightarrow G$ est une solution faible \`a $\alpha$ et si $H$ est commutatif, alors $F$ l'est aussi. Notre terminologie \'etend donc celle du cas commutatif rappel\'ee dans le \S\ref{ssec:intro_2} et aucune confusion n'est possible.
\end{rk}

\subsubsection{Passage des corps quelconques aux corps commutatifs et vice-versa} \label{sssec:fep_2}

Soit $H$ un corps de dimension finie sur son centre $h$. Nous renvoyons \`a \eqref{eq:red_norm} pour la d\'efinition de la forme polynomiale $\mathcal{F}_H$ associ\'ee \`a la norme r\'eduite de $H/h$. 

Soit $\alpha : G \rightarrow {\rm{Gal}}(L/H)$ un probl\`eme de plongement fini sur $H$. Puisque ${\rm{res}}_{\ell/h}^{L/H}$ est un isomorphisme, o\`u $\ell$ est le centre de $L$ (voir le II) du \S\ref{ssec:termi_1}),
\begin{equation} \label{def_fep_down}
\check{\alpha} = {\rm{res}}_{\ell/h}^{L/H} \circ \alpha : G \rightarrow {\rm{Gal}}(\ell/h)
\end{equation}
est un probl\`eme de plongement fini sur $h$. Par le th\'eor\`eme \ref{thm:DL2}, $\mathcal{F}_H$ n'a que le z\'ero trivial sur $\ell$. De plus, si $\beta : {\rm{Gal}}(F/H) \rightarrow G$ est une solution faible \`a $\alpha$, alors ${\rm{res}}^{F/H}_{f/h}$ est un isomorphisme, o\`u $f$ est le centre de $F$. Ainsi 
\begin{equation} \label{def_sol_down}
\check{\beta}=\beta \circ ({\rm{res}}^{F/H}_{f/h})^{-1} : {\rm{Gal}}(f/h) \rightarrow G
\end{equation}
est une solution faible \`a $\check{\alpha}$, car ${\rm{res}}^{L/H}_{\ell/h} \circ {\rm{res}}^{F/H}_{L/H} = {\rm{res}}^{f/h}_{\ell/h} \circ {\rm{res}}^{F/H}_{f/h}$. De plus, $\mathcal{F}_H$ n'a que le z\'ero trivial sur $f$ et $\beta$ est une solution \`a $\alpha$ si et seulement si $\check{\beta}$ est une solution \`a $\check{\alpha}$.

R\'eciproquement, soit $\alpha : G \rightarrow {\rm{Gal}}(\ell/h)$ un probl\`eme de plongement fini sur $h$ tel que $\mathcal{F}_H$ ne poss\`ede que le z\'ero trivial sur $\ell$. Par le th\'eor\`eme \ref{thm:DL2}, $(H \otimes_{h} \ell)/ H$ est galoisienne \`a groupe de Galois fini et le centre de $H \otimes_{h} \ell$ vaut $\ell$. Ainsi ${\rm{res}}^{(H \otimes_{h} \ell)/H}_{\ell/h}$ est un isomorphisme et
\begin{equation} \label{def_fep_up}
\hat{\alpha} = ({\rm{res}}^{(H \otimes_{h} \ell)/H}_{\ell/h})^{-1} \circ \alpha : G \rightarrow {\rm{Gal}}((H \otimes_{h} \ell)/H)
\end{equation}
est un probl\`eme de plongement fini sur $H$. De plus, si $\beta : {\rm{Gal}}(f/h) \rightarrow G$ est une solution faible \`a $\alpha$ telle que $\mathcal{F}_H$ ne poss\`ede que le z\'ero trivial sur $f$, alors, par le th\'eor\`eme \ref{thm:DL2}, $(H \otimes_{h} f)/ H$ est une extension galoisienne \`a groupe de Galois fini et le centre de $H \otimes_{h} f$ vaut $f$. Ainsi ${\rm{res}}^{(H \otimes_{h} f)/H}_{f/h}$ est un isomorphisme. Par cons\'equent, l'application compos\'ee
\begin{equation} \label{def_sol_up}
\hat{\beta}=\beta \circ {\rm{res}}^{(H \otimes_{h} f)/H}_{f/h} : {\rm{Gal}}((H \otimes_{h} f)/H) \rightarrow G
\end{equation}
est une solution faible \`a $\hat{\alpha}$, puisque ${\rm{res}}^{(H \otimes_{h} \ell)/H}_{\ell/h} \circ {\rm{res}}^{(H \otimes_{h} f)/H}_{(H \otimes_{f} \ell)/H} = {\rm{res}}^{f/h}_{\ell/h} \circ {\rm{res}}_{f/h}^{(H \otimes_{h} f)/H}$. Il est \'egalement clair que $\beta$ est une solution \`a $\alpha$ si et seulement si $\hat{\beta}$ est une solution \`a $\hat{\alpha}$.

Enfin, soit $\alpha : G \rightarrow {\rm{Gal}}(L/H)$ un probl\`eme de plongement fini sur $H$. Puisque $L = H \otimes_{h} \ell$ (voir th\'eor\`eme \ref{thm:DL2}), o\`u $h$ (resp. $\ell$) est le centre de $H$ (resp. de $L$), on a $\hat{\check{\alpha}}=\alpha$. De m\^eme, si $\beta : {\rm{Gal}}(F/H) \rightarrow G$ est une solution faible \`a $\alpha$, alors $\hat{\check{\beta}}=\beta$. R\'eciproquement, soit $\alpha : G \rightarrow {\rm{Gal}}(\ell/h)$ un probl\`eme de plongement fini sur $h$ tel que $\mathcal{F}_H$ ne poss\`ede que le z\'ero trivial sur $\ell$. Puisque le centre de $H \otimes_{h} \ell$ vaut $\ell$, on a $\check{\hat{\alpha}}=\alpha$. De m\^eme, si $\beta : {\rm{Gal}}(f/h) \rightarrow G$ est une solution faible \`a $\alpha$ telle que $\mathcal{F}_H$ ne poss\`ede que le z\'ero trivial sur $f$, alors $\check{\hat{\beta}}=\beta$.

Nous avons donc d\'emontr\'e le th\'eor\`eme suivant, qui g\'en\'eralise \`a la fois \cite[th\'eor\`eme 7]{DL20} et le th\'eor\`eme  \ref{thm:main_2} de l'introduction :

\begin{theo} \label{thm:DL_fep}
Soient $H$ un corps de dimension finie sur son centre $h$ et $\alpha : G \rightarrow {\rm{Gal}}(L/H)$ un probl\`eme de plongement fini sur $H$. Les applications $\beta \mapsto \check{\beta}$ et $\gamma \mapsto \hat{\gamma}$ sont des bijections r\'eciproques entre l'ensemble des solutions $\beta$ \`a $\alpha$ et l'ensemble des solutions $\gamma : {\rm{Gal}}(f/h) \rightarrow G$ \`a $\check{\alpha}$ telles que $\mathcal{F}_H$ ne poss\`ede que le z\'ero trivial sur $f$.
\end{theo}

\subsubsection{Solutions g\'eom\'etriques} \label{sssec:fep_3}

Soient $H$ un corps de dimension finie sur son centre $h$ et $\sigma \in {\rm{Aut}}(H)$. Soient $\alpha : G \rightarrow {\rm{Gal}}(L/H)$ un probl\`eme de plongement fini sur $H$ et $\tau$ un automorphisme de $L$ d'ordre fini \'etendant $\sigma$. Notons $\widetilde{\tau}$ la restriction de $\tau$ au centre $\ell$ de $L$ et supposons $\langle \widetilde{\tau}, {\rm{Gal}}(\ell/h) \rangle \cong \langle \widetilde{\tau} \rangle \times {\rm{Gal}}(\ell/h)$. Alors $L(t, \tau) / H(t, \sigma)$ est galoisienne \`a groupe de Galois fini et ${\rm{res}}^{L(t,\tau) / H(t, \sigma)}_{L/H}$ est un isomorphisme bien d\'efini (voir corollaire \ref{prop:res_sigma}). Comme le corps $H(t, \sigma)$ est de dimension finie sur son centre (voir lemme \ref{lemma:easy}), l'application compos\'ee
\begin{equation} \label{eq:fep_geo}
\alpha_{\sigma, \tau} = ({\rm{res}}^{L(t, \tau)/H(t, \sigma)}_{L/H})^{-1} \circ \alpha : G \rightarrow {\rm{Gal}}(L(t, \tau)/H(t, \sigma))
\end{equation}
est un probl\`eme de plongement fini sur $H(t, \sigma)$. Une {\it{solution $(\sigma, \tau)$-g\'eom\'etrique}} \`a $\alpha$ est une solution ${\rm{Gal}}(E/H(t, \sigma)) \rightarrow G$ \`a $\alpha_{\sigma, \tau}$. Si $\tau={\rm{id}}_L$, on dit plut\^ot {\it{solution g\'eom\'etrique}}. Comme dans la remarque \ref{rk:fep}, $E$ est commutatif d\`es que $H(t, \sigma)$ l'est. En particulier, il n'y a aucune confusion possible avec la notion de solution g\'eom\'etrique du \S\ref{ssec:intro_2}.

De plus, notons $\widetilde{\sigma}$ la restriction de $\sigma$ \`a $h$. Par les lemmes \ref{triv_1} et \ref{triv_2}, l'extension $\ell^{\langle \widetilde{\tau} \rangle} / h^{\langle \widetilde{\sigma} \rangle}$ est galoisienne et ${\rm{res}}^{\ell/h}_{\ell^{\langle \widetilde{\tau} \rangle} / h^{\langle \widetilde{\sigma} \rangle}}$ est un isomorphisme. Avec $\check{\alpha}$ comme dans \eqref{def_fep_down}, on peut donc consid\'erer le probl\`eme de plongement fini sur $h^{\langle \widetilde{\sigma} \rangle}$ suivant :
\begin{equation} \label{eq:fep_overline}
\overline{\alpha}_{\sigma, \tau} = {\rm{res}}^{\ell/h}_{\ell^{\langle \widetilde{\tau} \rangle} / h^{\langle \widetilde{\sigma} \rangle}} \circ \check{\alpha} : G \rightarrow {\rm{Gal}}(\ell^{\langle \widetilde{\tau} \rangle} / h^{\langle \widetilde{\sigma} \rangle}).
\end{equation}

\begin{lemme} \label{lemma:link}
On a $\alpha_{\sigma, \tau} = ({\rm{res}}^{L(t, \tau)/H(t, \sigma)}_{\ell^{\langle \widetilde{\tau} \rangle}/h^{\langle \widetilde{\sigma} \rangle}})^{-1} \circ \overline{\alpha}_{\sigma, \tau}.$
\end{lemme}

\begin{proof}[Preuve]
Notons $m$ l'ordre de $\tau$. Par la d\'efinition de ${\rm{res}}^{L(t, \tau)/H(t, \sigma)}_{L/H}$, on a
$$\begin{array} {ccl}
\alpha_{\sigma, \tau} & = & ({\rm{res}}^{L(t, \tau)/H(t, \sigma)}_{L/H})^{-1} \circ \alpha  \\
& = & ({\rm{res}}^{L(t, \tau)/H(t, \sigma)}_{\ell^{\langle \widetilde{\tau} \rangle}(t^m)/h^{\langle \widetilde{\sigma} \rangle}(t^m)})^{-1} \circ ({\rm{res}}^{\ell^{\langle \widetilde{\tau} \rangle}(t^m)/h^{\langle \widetilde{\sigma} \rangle}(t^m)}_{\ell^{\langle \widetilde{\tau} \rangle}/h^{\langle \widetilde{\sigma} \rangle}})^{-1} \circ {\rm{res}}^{\ell/h}_{\ell^{\langle \widetilde{\tau} \rangle}/h^{\langle \widetilde{\sigma} \rangle}} \circ \check{\alpha} \\
&= & ({\rm{res}}^{L(t, \tau)/H(t, \sigma)}_{\ell^{\langle \widetilde{\tau} \rangle}(t^m)/h^{\langle \widetilde{\sigma} \rangle}(t^m)})^{-1} \circ ({\rm{res}}^{\ell^{\langle \widetilde{\tau} \rangle}(t^m)/h^{\langle \widetilde{\sigma} \rangle}(t^m)}_{\ell^{\langle \widetilde{\tau} \rangle}/h^{\langle \widetilde{\sigma} \rangle}})^{-1} \circ  \overline{\alpha}_{\sigma, \tau}\\
& = & ({\rm{res}}^{L(t, \tau)/H(t, \sigma)}_{\ell^{\langle \widetilde{\tau} \rangle}/h^{\langle \widetilde{\sigma} \rangle}})^{-1} \circ \overline{\alpha}_{\sigma, \tau},
\end{array}$$
ce qui d\'emontre le lemme.
\end{proof}

\section{R\'esolution de probl\`emes de plongement finis scind\'es} \label{sec:main}

Le th\'eor\`eme suivant est l'objectif de cette partie :

\begin{theo} \label{thm:3}
Soit $H$ un corps de dimension finie sur son centre $h$, soit $\sigma \in {\rm{Aut}}(H)$ et soit $\widetilde{\sigma}$ la restriction de $\sigma$ \`a $h$. Soient $\alpha : G \rightarrow {\rm{Gal}}(L/H)$ un probl\`eme de plongement fini sur $H$ et $\tau$ un automorphisme de $L$ d'ordre fini \'etendant $\sigma$. Soit $\widetilde{\tau}$ la restriction de $\tau$ au centre $\ell$ de $L$. Supposons que les trois conditions suivantes soient v\'erifi\'ees :

\vspace{0.5mm}

\noindent
{\rm{1)}} $\alpha$ est scind\'e,

\vspace{0.5mm}

\noindent
{\rm{2)}} $\langle \widetilde{\tau}, {\rm{Gal}}(\ell/h) \rangle \cong \langle \widetilde{\tau} \rangle \times {\rm{Gal}}(\ell/h)$,

\vspace{0.5mm}

\noindent
{\rm{3)}} il existe un sous-corps ample $k_0$ de $h^{\langle \widetilde{\sigma} \rangle}$ et un corps commutatif $\ell_0$ galoisien sur $k_0$ tels que $\ell_0$ et $h^{\langle \widetilde{\sigma} \rangle}$ soient lin\'eairement disjoints sur $k_0$ et tels que $\ell_0 h^{\langle \widetilde{\sigma} \rangle} = \ell^{\langle \widetilde{\tau} \rangle}$.

\vspace{0.5mm}

\noindent
Alors $\alpha$ admet une solution $(\sigma, \tau)$-g\'eom\'etrique.
\end{theo}

Nous aurons besoin du lemme suivant, qui fait le lien avec le cas des corps commutatifs :

\begin{lemme} \label{lemma:DL}
En conservant les notations du th\'eor\`eme, supposons que {\rm{2)}} soit v\'erifi\'ee et notons $m$ l'ordre de $\tau$. Alors $\alpha$ a une solution $(\sigma, \tau)$-g\'eom\'etrique, pourvu que $\overline{\alpha}_{\sigma, \tau}$ (voir \eqref{eq:fep_overline}) poss\`ede une solution g\'eom\'etrique ${\rm{Gal}}(e/h^{\langle \widetilde{\sigma} \rangle}(t^m)) \rightarrow G$ v\'erifiant $e \subseteq \ell^{\langle \widetilde{\tau} \rangle} ((t^m))$.
\end{lemme}

\begin{proof}[Preuve du lemme \ref{lemma:DL}]
Tout d'abord, notons que $\overline{\alpha}_{\sigma, \tau}$ est bien d\'efini puisque 2) est vraie. Soit $\beta : {\rm{Gal}}(e/h^{\langle \widetilde{\sigma} \rangle}(t^m)) \rightarrow G$ une solution g\'eom\'etrique \`a $\overline{\alpha}_{\sigma, \tau}$ telle que $e \subseteq \ell^{\langle \widetilde{\tau} \rangle} ((t^m))$. Puisque la derni\`ere inclusion vaut, \cite[proposition 2.1.2]{Beh21} s'applique : $L(t, \tau) \otimes_{\ell^{\langle \widetilde{\tau} \rangle}(t^m)} e$ est un corps. De plus, puisque 2) est vraie, on a
\begin{equation} \label{eq:tensor_product}
L(t, \tau) = H(t, \sigma) \otimes_{h^{\langle \widetilde{\sigma} \rangle}(t^m)} \ell^{\langle \widetilde{\tau} \rangle}(t^m)
\end{equation}
(voir proposition \ref{angelot}). Ainsi, en vertu de, par exemple, \cite[proposition 37]{Des12}, on a
$$L(t, \tau) \otimes_{\ell^{\langle \widetilde{\tau} \rangle}(t^m)} e =(H(t, \sigma) \otimes_{h^{\langle \widetilde{\sigma} \rangle}(t^m)} \ell^{\langle \widetilde{\tau} \rangle}(t^m)) \otimes_{\ell^{\langle \widetilde{\tau} \rangle}(t^m)} e = H(t, \sigma)  \otimes_{h^{\langle \widetilde{\sigma} \rangle}(t^m)} e.$$
En particulier, $E=H(t, \sigma)  \otimes_{h^{\langle \widetilde{\sigma} \rangle}(t^m)} e$ est un corps et, par le \S\ref{ssec:prelim_2}, l'extension $E/H(t, \sigma)$ est galoisienne. De plus, puisque $e$ contient $\ell^{\langle \widetilde{\tau} \rangle} (t^m)$ et \eqref{eq:tensor_product} est vraie, on a $L(t, \tau) \subseteq E$.

Maintenant, le \S\ref{ssec:prelim_2} montre que l'application de restriction ${\rm{res}}^{E/H(t, \sigma)}_{e/h^{\langle \widetilde{\sigma} \rangle}(t^m)}$ est un isomorphisme. On peut alors consid\'erer l'isomorphisme compos\'e $\beta \circ {\rm{res}}^{E/H(t, \sigma)}_{e/h^{\langle \widetilde{\sigma} \rangle}(t^m)} : {\rm{Gal}}(E/H(t, \sigma)) \rightarrow G$. Le lemme \ref{lemma:link} fournit alors
$$\begin{array} {ccl}
\alpha_{\sigma, \tau} \circ \beta \circ {\rm{res}}^{E/H(t, \sigma)}_{e/h^{\langle \widetilde{\sigma} \rangle}(t^m)} & = & ({\rm{res}}^{L(t, \tau) / H(t, \sigma)}_{\ell^{\langle \widetilde{\tau} \rangle}/h^{\langle \widetilde{\sigma} \rangle}})^{-1} \circ \overline{\alpha}_{\sigma, \tau} \circ \beta \circ {\rm{res}}^{E/H(t, \sigma)}_{e/h^{\langle \widetilde{\sigma} \rangle}(t^m)} \\
& =& ({\rm{res}}^{L(t, \tau) / H(t, \sigma)}_{\ell^{\langle \widetilde{\tau} \rangle}/h^{\langle \widetilde{\sigma} \rangle}})^{-1} \circ {\rm{res}}^{e/h^{\langle \widetilde{\sigma} \rangle}(t^m)}_{\ell^{\langle \widetilde{\tau} \rangle}/ h^{\langle \widetilde{\sigma} \rangle}} \circ {\rm{res}}^{E/H(t, \sigma)}_{e/h^{\langle \widetilde{\sigma} \rangle}(t^m)}\\
& = & {\rm{res}}^{E/H(t, \sigma)}_{L(t, \tau)/H(t, \sigma)}, 
\end{array}$$
ce qui conclut la d\'emonstration du lemme.
\end{proof}

\begin{proof}[Preuve du th\'eor\`eme \ref{thm:3}]
Par 3), ${\rm{res}}^{\ell^{\langle \widetilde{\tau} \rangle} / h^{\langle \widetilde{\sigma} \rangle}}_{\ell_0/k_0}$ est un isomorphisme. Consid\'erons le probl\`eme de plongement fini
$$\alpha'={\rm{res}}^{\ell^{\langle \widetilde{\tau} \rangle} / h^{\langle \widetilde{\sigma} \rangle}}_{\ell_0/k_0} \circ \overline{\alpha}_{\sigma, \tau} : G \rightarrow {\rm{Gal}}(\ell_0/k_0)$$
sur $k_0$. Puisque $k_0$ est ample (par 3)) et $\alpha'$ est scind\'e (par 1)), on peut appliquer \cite[Main Theorem A]{Pop96} (voir aussi \cite[Theorem 1]{HJ98b}) pour obtenir que $\alpha'$ a une solution g\'eom\'etrique $\beta : {\rm{Gal}}(e/k_0(t^m)) \rightarrow G$ v\'erifiant $e \subseteq \ell_0((t^m))$, o\`u $m$ est l'ordre de $\tau$. Or, par 3) et puisque $e \cap \overline{k_0} = \ell_0$, l'application de restriction ${\rm{res}}^{eh^{\langle \widetilde{\sigma} \rangle}/h^{\langle \widetilde{\sigma} \rangle}(t^m)}_{e/k_0(t^m)}$ est un isomorphisme. Ainsi l'isomorphisme compos\'e
$$\beta \circ {\rm{res}}^{eh^{\langle \widetilde{\sigma} \rangle}/h^{\langle \widetilde{\sigma} \rangle}(t^m)}_{e/k_0(t^m)} : {\rm{Gal}}(eh^{\langle \widetilde{\sigma} \rangle}/h^{\langle \widetilde{\sigma} \rangle}(t^m)) \rightarrow G$$
est une solution g\'eom\'etrique \`a $\overline{\alpha}_{\sigma, \tau}$. De plus, comme $e \subseteq \ell_0((t^m))$, on a $eh^{\langle \widetilde{\sigma} \rangle} \subseteq \ell_0 h^{\langle \widetilde{\sigma} \rangle}((t^m))= \ell^{\langle \widetilde{\tau} \rangle}((t^m)).$ Il ne reste plus qu'\`a appliquer le lemme \ref{lemma:DL} pour conclure la d\'emonstration.
\end{proof}

\begin{rk} \label{rk:coro_1}
1) Pour $\tau={\rm{id}}_L$, le th\'eor\`eme \ref{thm:3} affirme que tout probl\`eme de plongement fini scind\'e sur un corps $H$ de dimension finie sur son centre $h$ admet une solution g\'eom\'etrique, si $h$ est ample, comme annonc\'e dans le th\'eor\`eme \ref{thm:main_1} de l'introduction.

\vspace{1mm}

\noindent
2) Si $H=L$ dans le th\'eor\`eme \ref{thm:3}, on a l'\'enonc\'e suivant, qui est un cas particulier de \cite[th\'eor\`eme A]{Beh21} et dont le cas $\sigma={\rm{id}}_H$ est \cite[th\'eor\`eme B]{DL20} :

\vspace{1mm}

\noindent
{\it{Soient $H$ un corps de dimension finie sur son centre $h$ et $\sigma$ un automorphisme de $H$ d'ordre fini. Supposons que $h^{\langle \widetilde{\sigma} \rangle}$ contienne un corps ample, o\`u $\widetilde{\sigma}$ est la restriction de $\sigma$ \`a $h$. Alors tout groupe fini est le groupe de Galois d'une extension galoisienne de $H(t, \sigma)$.}}

\vspace{1mm}

\noindent
3) Les 1) et 2) de cette remarque correspondent aux deux situations de la remarque \ref{rk:special_cases} dans lesquelles le corollaire \ref{prop:res_sigma} s'applique imm\'ediatement. Bien entendu, des corollaires du th\'eor\`eme \ref{thm:3} correspondant aux situations ``non triviales" de la remarque \ref{rk:special_cases} peuvent aussi \^etre donn\'es. Nous laissons ce travail au lecteur int\'eress\'e.
\end{rk}

\section{A propos de la r\'eduction faible$\rightarrow$scind\'e} \label{sec:fiber}

Dans cette derni\`ere partie, nous \'etendons la r\'eduction faible$\rightarrow$scind\'e au cas des corps de dimension finie sur leurs centres. Nous donnons aussi une variante du th\'eor\`eme \ref{thm:3} pour les probl\`emes de plongement finis admettant une solution faible (voir corollaire \ref{thm:4}).

\subsection{Extensions de  la r\'eduction faible$\rightarrow$scind\'e} \label{ssec:fiber_1}

Rappelons tout d'abord cette r\'eduction dans le cas commutatif (voir \cite[\S1 B) 2)]{Pop96} et \cite[\S2.1.2]{DD97b}) :

\begin{prop} \label{prop:fiber}
Soient $k$ un corps commutatif et $\alpha : G \rightarrow {\rm{Gal}}(\ell/k)$ un probl\`eme de plongement fini sur $k$. Pour toute solution faible ${\rm{Gal}}(\ell'/k) \rightarrow G$ \`a $\alpha$, il existe un probl\`eme de plongement fini scind\'e $\alpha' : G' \rightarrow {\rm{Gal}}(\ell'/k)$ sur $k$ v\'erifiant les propri\'et\'es suivantes :

\vspace{0.5mm}

\noindent
{\rm{1)}} ${\rm{ker}}(\alpha) \cong {\rm{ker}}(\alpha')$,

\vspace{0.5mm}

\noindent
{\rm{2)}} toute solution ${\rm{Gal}}(f'/k) \rightarrow G'$ \`a $\alpha'$ donne une solution ${\rm{Gal}}(f/k) \rightarrow G$ \`a $\alpha$ avec $f \subseteq f'$,

\vspace{0.5mm}

\noindent
{\rm{3)}} toute solution g\'eom\'etrique ${\rm{Gal}}(e'/k(t)) \rightarrow G'$ \`a $\alpha'$ fournit une solution g\'eom\'etrique $ {\rm{Gal}}(e/k(t)) \rightarrow G$ \`a $\alpha$ v\'erifiant $e \subseteq e'$.
\end{prop}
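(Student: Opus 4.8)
The plan is to reproduce the classical fiber-product construction. Write $N = {\rm{ker}}(\alpha)$, $\Gamma = {\rm{Gal}}(\ell'/k)$, and let $\beta : \Gamma \hookrightarrow G$ denote the given weak solution, so that $\alpha \circ \beta = r$, where $r := {\rm{res}}^{\ell'/k}_{\ell/k}$. I would set $G' = \{(g, \gamma) \in G \times \Gamma : \alpha(g) = r(\gamma)\}$, a finite group, and take $\alpha' : G' \to \Gamma$ to be the second projection $(g,\gamma) \mapsto \gamma$. Since $\alpha$ is onto and $r$ is onto, $\alpha'$ is an epimorphism, so $\alpha' : G' \to {\rm{Gal}}(\ell'/k)$ is a finite embedding problem on $k$. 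Its kernel is $\{(g,1) : g \in N\} \cong N$, which gives property 1). The weak solution furnishes the section $\gamma \mapsto (\beta(\gamma), \gamma)$, well-defined into $G'$ precisely because $\alpha \circ \beta = r$, so $\alpha'$ is split.

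Next I would introduce the first projection $\pi : G' \to G$, $(g,\gamma) \mapsto g$. It is onto (again because $r$ is onto), satisfies $\alpha \circ \pi = r \circ \alpha'$, and has kernel $K' = \{1\} \times {\rm{Gal}}(\ell'/\ell)$. The key point is that $K'$ is normal in $G'$: since ${\rm{Gal}}(\ell'/\ell) \trianglelefteq \Gamma$, conjugating $(1,\delta)$ by $(g,\gamma)$ yields $(1, \gamma \delta \gamma^{-1}) \in K'$. For property 2), given a solution $\psi : {\rm{Gal}}(f'/k) \to G'$ to $\alpha'$, I would form $f = (f')^{\psi^{-1}(K')}$. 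Normality of $K'$ makes $f/k$ Galois with ${\rm{Gal}}(f/k) \cong G'/K' \cong G$ via the isomorphism induced by $\pi \circ \psi$. Because $\alpha' \circ \psi = {\rm{res}}^{f'/k}_{\ell'/k}$, every $\rho \in \psi^{-1}(K')$ restricts on $\ell'$ into ${\rm{Gal}}(\ell'/\ell)$ and hence fixes $\ell$, so $\ell \subseteq f \subseteq f'$. The identity $\alpha \circ \pi = r \circ \alpha'$ together with the tower $r \circ {\rm{res}}^{f'/k}_{\ell'/k} = {\rm{res}}^{f'/k}_{\ell/k}$ then shows that the induced isomorphism ${\rm{Gal}}(f/k) \to G$, composed with $\alpha$, equals ${\rm{res}}^{f/k}_{\ell/k}$, i.e. it solves $\alpha$.

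Property 3) is obtained verbatim with $k(t)$ in place of $k$: a geometric solution $\psi : {\rm{Gal}}(e'/k(t)) \to G'$ to $\alpha'$ has $\ell' \subseteq e'$, and setting $e = (e')^{\psi^{-1}(K')}$ yields $\ell \subseteq e \subseteq e'$ together with an isomorphism ${\rm{Gal}}(e/k(t)) \to G$, induced by $\pi \circ \psi$, which solves $\alpha$ geometrically. I do not anticipate a genuine obstacle, as the argument is purely group-theoretic once the fiber product is set up; the only delicate point is the bookkeeping of the restriction maps — verifying that $K'$ is normal (so the descended field is Galois over $k$, resp. $k(t)$), that $\psi^{-1}(K')$ fixes $\ell$, and that $\alpha \circ \pi = r \circ \alpha'$ transports each solution to the correct restriction map.
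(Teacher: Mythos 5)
Your construction is correct: the fiber product $G' = G \times_{{\rm{Gal}}(\ell/k)} {\rm{Gal}}(\ell'/k)$ with the section $\gamma \mapsto (\beta(\gamma),\gamma)$ coming from the weak solution, the first projection $\pi$ with kernel $K' = \{1\}\times{\rm{Gal}}(\ell'/\ell)$, and the descent $f=(f')^{\psi^{-1}(K')}$ is exactly the standard weak$\rightarrow$split reduction. The paper does not prove this proposition itself but merely recalls it from \cite{Pop96} and \cite{DD97b}, and your argument is precisely the one found there, so there is nothing to add.
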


Nous d\'emontrons maintenant une premi\`ere g\'en\'eralisation :

\begin{prop} \label{prop:fiber1}
Soient $H$ un corps de dimension finie sur son centre, $\alpha : G \rightarrow {\rm{Gal}}(L/H)$ un probl\`eme de plongement fini sur $H$ et $\gamma : {\rm{Gal}}(L'/H) \rightarrow G$ une solution faible \`a $\alpha$. Il existe un probl\`eme de plongement fini scind\'e $\alpha' : G' \rightarrow {\rm{Gal}}(L'/H)$ sur $H$ v\'erifiant :

\vspace{0.5mm}

\noindent
{\rm{1)}} ${\rm{ker}}(\alpha) \cong {\rm{ker}}(\alpha')$,

\vspace{0.5mm}

\noindent
{\rm{2)}} toute solution ${\rm{Gal}}(F'/H) \rightarrow G'$ \`a $\alpha'$ donne une solution ${\rm{Gal}}(F/H) \rightarrow G$ \`a $\alpha$ avec $F \subseteq F'$.
\end{prop}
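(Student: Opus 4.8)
Le plan est de ramener entièrement la situation au cas commutatif, via le passage au centre développé au \S\ref{sssec:fep_2}, puis d'invoquer la réduction commutative (proposition \ref{prop:fiber}). Notons $h$ (resp. $\ell$, resp. $\ell'$) le centre de $H$ (resp. de $L$, resp. de $L'$). D'après \eqref{def_fep_down}, $\check\alpha : G \to {\rm{Gal}}(\ell/h)$ est un problème de plongement fini sur $h$ et, d'après \eqref{def_sol_down}, $\check\gamma = \gamma \circ ({\rm{res}}^{L'/H}_{\ell'/h})^{-1} : {\rm{Gal}}(\ell'/h) \to G$ en est une solution faible. J'appliquerais alors la proposition \ref{prop:fiber} à $\check\alpha$ et à sa solution faible $\check\gamma$ : on obtient un problème de plongement fini scindé commutatif $\alpha_0' : G' \to {\rm{Gal}}(\ell'/h)$ sur $h$ vérifiant les conclusions 1), 2) et 3) de cette proposition (le groupe $G'$ n'est autre que le produit fibré $G \times_{{\rm{Gal}}(\ell/h)} {\rm{Gal}}(\ell'/h)$ de $\alpha$ et de ${\rm{res}}^{L'/H}_{L/H}$, identifié en haut et en bas grâce à la compatibilité des applications de restriction au centre).

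Ensuite, je relèverais ce problème commutatif en un problème sur $H$. Par le théorème \ref{thm:DL2}, $\mathcal{F}_H$ n'a que le zéro trivial sur $\ell'$ et $H \otimes_h \ell' = L'$, de sorte que la construction \eqref{def_fep_up} fournit un problème de plongement fini $\alpha' := \widehat{\alpha_0'} = ({\rm{res}}^{L'/H}_{\ell'/h})^{-1} \circ \alpha_0' : G' \to {\rm{Gal}}(L'/H)$ sur $H$. Comme $({\rm{res}}^{L'/H}_{\ell'/h})^{-1}$ est un isomorphisme composé au but, $\alpha'$ et $\alpha_0'$ ont même noyau, d'où ${\rm{ker}}(\alpha') \cong {\rm{ker}}(\alpha_0') \cong {\rm{ker}}(\check\alpha) \cong {\rm{ker}}(\alpha)$ par la conclusion 1) de la proposition \ref{prop:fiber} (la dernière identification venant de ce que $\check\alpha = {\rm{res}}^{L/H}_{\ell/h} \circ \alpha$ et que ${\rm{res}}^{L/H}_{\ell/h}$ est un isomorphisme). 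De même, toute section de $\alpha_0'$, composée avec ${\rm{res}}^{L'/H}_{\ell'/h}$, fournit une section de $\alpha'$ ; donc $\alpha'$ est scindé. Ceci établirait l'existence de $\alpha'$ et le point 1).

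Pour le point 2), je partirais d'une solution $\beta' : {\rm{Gal}}(F'/H) \to G'$ à $\alpha'$ et j'utiliserais le dictionnaire du \S\ref{sssec:fep_2} dans les deux sens. Soit $f'$ le centre de $F'$ ; par le théorème \ref{thm:DL2}, $\mathcal{F}_H$ n'a que le zéro trivial sur $f'$, de sorte que, via \eqref{def_sol_down}, $\check{\beta'} : {\rm{Gal}}(f'/h) \to G'$ est une solution au problème $\check{(\alpha')} = \alpha_0'$ (on utilise $\check{\widehat{\cdot}} = {\rm{id}}$, voir \S\ref{sssec:fep_2}). La conclusion 2) de la proposition \ref{prop:fiber} fournit alors une solution $\delta : {\rm{Gal}}(f/h) \to G$ à $\check\alpha$ avec $f \subseteq f'$. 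Comme $f \subseteq f'$, $\mathcal{F}_H$ n'a que le zéro trivial sur $f$, et le théorème \ref{thm:DL_fep} (ou \eqref{def_sol_up}) produit une solution $\widehat\delta : {\rm{Gal}}((H \otimes_h f)/H) \to G$ à $\widehat{\check\alpha} = \alpha$. En posant $F = H \otimes_h f$, l'isomorphisme de multiplication $H \otimes_h f' \to F'$ du théorème \ref{thm:DL2} identifie $F$ au sous-corps de $F'$ engendré par $H$ et $f$ ; on a donc $F \subseteq F'$, ce qui conclurait.

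La difficulté principale ne sera pas dans l'argument de théorie des groupes — le produit fibré est littéralement le même au-dessus de $H$ et au-dessus de $h$ — mais dans le contrôle des corps de définition : il faut s'assurer à chaque étape que l'objet commutatif extrait se relève en un corps gauche galoisien sur $H$ et contenant $L$. C'est précisément ce que garantissent le théorème \ref{thm:DL2} et le théorème \ref{thm:DL_fep}, via le fait que la contrainte polynomiale ``$\mathcal{F}_H$ n'a que le zéro trivial'' passe automatiquement aux sous-corps (ici $f \subseteq f'$), couplé à la bijection entre solutions du théorème \ref{thm:DL_fep}. On pourrait aussi démontrer le point 2) directement au-dessus de $H$, en prenant pour $F$ le sous-corps de $F'$ fixé par ${\rm{ker}}(p_1 \circ \beta')$, où $p_1 : G' \to G$ est la première projection du produit fibré ; mais cela exige la correspondance de Galois pour $F'/H$, que la réduction au centre ci-dessus permet justement d'éviter.
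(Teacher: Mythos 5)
Votre preuve est correcte et suit essentiellement la même démarche que celle de l'article : descente au centre via $\check{\alpha}$ et $\check{\gamma}$, application de la proposition \ref{prop:fiber} dans le cas commutatif, puis remontée par $\widehat{\,\cdot\,}$ en utilisant que la contrainte « $\mathcal{F}_H$ n'a que le zéro trivial » se transmet de $f'$ à $f \subseteq f'$. Les compléments que vous donnez (identification du produit fibré, esquisse d'une preuve directe au-dessus de $H$) sont cohérents mais non nécessaires.
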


\begin{proof}[Preuve]
Consid\'erons le probl\`eme de plongement fini $\check{\alpha} : G \rightarrow {\rm{Gal}}(\ell/h)$ sur le centre $h$ de $H$ (voir \eqref{def_fep_down}), o\`u $\ell$ est le centre de $L$. Comme $\gamma$ est une solution faible \`a $\alpha$, $\check{\gamma}  : {\rm{Gal}}(\ell'/h) \rightarrow G$ (voir \eqref{def_sol_down}), o\`u $\ell'$ est le centre de $L'$, est une solution faible \`a $\check{\alpha}$ et $\mathcal{F}_H$ (voir \eqref{eq:red_norm}) n'a que le z\'ero trivial sur $\ell'$. La proposition \ref{prop:fiber} fournit alors un probl\`eme de plongement fini scind\'e $(\check{\alpha})' : G' \rightarrow$ ${\rm{Gal}}(\ell'/h)$ sur $h$ qui en v\'erifie les 1) et 2). Comme $\mathcal{F}_H$ n'a que le z\'ero trivial sur $\ell'$, on peut consid\'erer le probl\`eme de plongement fini ${\widehat{(\check{\alpha})'}} : G' \rightarrow {\rm{Gal}}(L'/H)$ sur $H$ (voir \eqref{def_fep_up}), que l'on note $\alpha'$. Notons que $\alpha'$ est scind\'e (car $(\check{\alpha})'$ l'est). De plus, puisque $(\check{\alpha})'$ v\'erifie le 1) de la proposition \ref{prop:fiber}, on a ${\rm{ker}}(\alpha')= {\rm{ker}}((\check{\alpha})') \cong {\rm{ker}}(\check{\alpha}) = {\rm{ker}}(\alpha)$, ce qui d\'emontre le 1). Maintenant, soit $\beta' : {\rm{Gal}}(F'/H) \rightarrow G'$ une solution \`a $\alpha'$. Alors $\check{\beta'} : {\rm{Gal}}(f'/h) \rightarrow G'$ est une solution \`a $\check{\alpha'} = (\check{\alpha})'$, o\`u $f'$ est le centre de $F'$, et $\mathcal{F}_ {H}$ n'a que le z\'ero trivial sur $f'$. Comme $(\check{\alpha})'$ v\'erifie le 2) de la proposition \ref{prop:fiber}, il existe une solution $\beta : {\rm{Gal}}(f/h) \rightarrow G$ \`a $\check{\alpha}$ telle que $f \subseteq f'$. Enfin, puisque $\mathcal{F}_{H}$ n'a que le z\'ero trivial sur $f$, $\hat{\beta} : {\rm{Gal}}((H \otimes_{h} f)/H) \rightarrow G$ (voir \eqref{def_sol_up}) est une solution \`a $\hat{\check{\alpha}}=\alpha$ telle que $H \otimes_{h} f \subseteq H \otimes_{h} f' = F'$. 
\end{proof}

Nous concluons cette partie avec la variante g\'eom\'etrique de la proposition \ref{prop:fiber1} suivante :

\begin{prop} \label{prop:fiber2}
Soient $H$ un corps de dimension finie sur son centre $h$ et $\sigma \in {\rm{Aut}}(H)$. Soient $\alpha : G \rightarrow {\rm{Gal}}(L/H)$ un probl\`eme de plongement fini sur $H$ et $\tau \in {\rm{Aut}}(L)$ \'etendant $\sigma$ et tel que
\begin{equation} \label{eq:produit_L}
\langle \widetilde{\tau}, {\rm{Gal}}(\ell/h) \rangle \cong \langle \widetilde{\tau} \rangle \times {\rm{Gal}}(\ell/h),
\end{equation}
o\`u $\widetilde{\tau}$ est la restriction de $\tau$ au centre $\ell$ de $L$. Soit $\gamma : {\rm{Gal}}(L'/H) \rightarrow G$ une solution faible \`a $\alpha$ et soit $\tau'$ un automorphisme de $L'$ d'ordre fini \'etendant $\tau$ et tel que
\begin{equation} \label{eq:produit_L'}
\langle \widetilde{\tau'}, {\rm{Gal}}(\ell'/h) \rangle \cong \langle \widetilde{\tau'} \rangle \times {\rm{Gal}}(\ell'/h),
\end{equation} 
o\`u $\widetilde{\tau'}$ est la restriction de $\tau'$ au centre $\ell'$ de $L'$. Alors il existe un probl\`eme de plongement fini scind\'e $\alpha' : G' \rightarrow {\rm{Gal}}(L'/H)$ sur $H$ v\'erifiant les deux conditions suivantes :

\vspace{0.5mm}

\noindent
{\rm{1)}} ${\rm{ker}}(\alpha) \cong {\rm{ker}}(\alpha')$,

\vspace{0.5mm}

\noindent
{\rm{2)}} toute solution $(\sigma, \tau')$-g\'eom\'etrique ${\rm{Gal}}(E'/H(t, \sigma)) \rightarrow G'$ \`a $\alpha'$ fournit une solution $(\sigma, \tau)$-g\'eom\'etrique ${\rm{Gal}}(E/H(t, \sigma)) \rightarrow G$ \`a $\alpha$ v\'erifiant $E \subseteq E'$.
\end{prop}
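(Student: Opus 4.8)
The plan is to transport the entire problem to the field $H(t, \sigma)$, which is finite-dimensional over its center by Lemma \ref{lemma:easy}, and to apply there the weak$\rightarrow$split reduction already obtained in Proposition \ref{prop:fiber1}. First, note that by \eqref{eq:produit_L} and Corollary \ref{prop:res_sigma}, the extension $L(t, \tau)/H(t, \sigma)$ is Galois with finite Galois group and ${\rm{res}}^{L(t, \tau)/H(t, \sigma)}_{L/H}$ is a well-defined isomorphism, so that the finite embedding problem $\alpha_{\sigma, \tau}$ of \eqref{eq:fep_geo} over $H(t, \sigma)$ is well defined; likewise, applying \eqref{eq:produit_L'} and Corollary \ref{prop:res_sigma} to $L'/H$ and $\tau'$, the extension $L'(t, \tau')/H(t, \sigma)$ is Galois with finite Galois group and ${\rm{res}}^{L'(t, \tau')/H(t, \sigma)}_{L'/H}$ is a well-defined isomorphism. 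Finally, since $L \subseteq L'$ and $\tau'$ extends $\tau$, we have $L(t, \tau) \subseteq L'(t, \tau')$.

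The first step, which is the heart of the argument, will be to show that the weak solution $\gamma$ to $\alpha$, together with the choice of $\tau'$, induces a weak solution to $\alpha_{\sigma, \tau}$ over $H(t, \sigma)$. To this end I would set
$$\gamma_{\rm{geo}} = \gamma \circ {\rm{res}}^{L'(t, \tau')/H(t, \sigma)}_{L'/H} : {\rm{Gal}}(L'(t, \tau')/H(t, \sigma)) \longrightarrow G,$$
which is a monomorphism, being the composite of the monomorphism $\gamma$ with an isomorphism. To check that it is a weak solution to $\alpha_{\sigma, \tau}$, I would compute $\alpha_{\sigma, \tau} \circ \gamma_{\rm{geo}}$ by combining the identity $\alpha \circ \gamma = {\rm{res}}^{L'/H}_{L/H}$, the definition \eqref{eq:fep_geo} of $\alpha_{\sigma, \tau}$, and transitivity of the restriction maps in the form
$${\rm{res}}^{L'/H}_{L/H} \circ {\rm{res}}^{L'(t, \tau')/H(t, \sigma)}_{L'/H} = {\rm{res}}^{L'(t, \tau')/H(t, \sigma)}_{L/H} = {\rm{res}}^{L(t, \tau)/H(t, \sigma)}_{L/H} \circ {\rm{res}}^{L'(t, \tau')/H(t, \sigma)}_{L(t, \tau)/H(t, \sigma)}.$$
This yields $\alpha_{\sigma, \tau} \circ \gamma_{\rm{geo}} = {\rm{res}}^{L'(t, \tau')/H(t, \sigma)}_{L(t, \tau)/H(t, \sigma)}$, i.e. $\gamma_{\rm{geo}}$ is a weak solution to $\alpha_{\sigma, \tau}$.

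Next I would apply Proposition \ref{prop:fiber1} to the finite embedding problem $\alpha_{\sigma, \tau}$ over $H(t, \sigma)$ equipped with the weak solution $\gamma_{\rm{geo}}$. This produces a split finite embedding problem $\alpha'_{\rm{geo}} : G' \rightarrow {\rm{Gal}}(L'(t, \tau')/H(t, \sigma))$ over $H(t, \sigma)$ with ${\rm{ker}}(\alpha_{\sigma, \tau}) \cong {\rm{ker}}(\alpha'_{\rm{geo}})$ and such that every solution to $\alpha'_{\rm{geo}}$ yields a solution to $\alpha_{\sigma, \tau}$ with the corresponding inclusion of fields. It then remains to ``de-geometrize'': set $\alpha' = {\rm{res}}^{L'(t, \tau')/H(t, \sigma)}_{L'/H} \circ \alpha'_{\rm{geo}} : G' \rightarrow {\rm{Gal}}(L'/H)$, a split finite embedding problem over $H$ satisfying ${\rm{ker}}(\alpha') = {\rm{ker}}(\alpha'_{\rm{geo}}) \cong {\rm{ker}}(\alpha_{\sigma, \tau}) = {\rm{ker}}(\alpha)$, which gives item 1). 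By construction $\alpha'_{\sigma, \tau'} = ({\rm{res}}^{L'(t, \tau')/H(t, \sigma)}_{L'/H})^{-1} \circ \alpha' = \alpha'_{\rm{geo}}$, so that a $(\sigma, \tau')$-geometric solution to $\alpha'$ is exactly a solution to $\alpha'_{\rm{geo}}$, while a solution to $\alpha_{\sigma, \tau}$ is exactly a $(\sigma, \tau)$-geometric solution to $\alpha$. Item 2) then follows immediately from the corresponding property of Proposition \ref{prop:fiber1}, the inclusion $E \subseteq E'$ being the one it provides.

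The main obstacle will be to set up the right framework rather than any single computation. A naive descent to the commutative center followed by Pop's reduction would force one to control inclusions inside Laurent series fields (as in Lemma \ref{lemma:DL}), which is awkward for an arbitrary $(\sigma, \tau')$-geometric solution; working directly over $H(t, \sigma)$ and invoking the already available non-commutative reduction (Proposition \ref{prop:fiber1}) sidesteps this entirely. The one technical point to handle with care is the verification, via Corollary \ref{prop:res_sigma} and transitivity of the restriction maps, that $\gamma_{\rm{geo}}$ is indeed a weak solution to $\alpha_{\sigma, \tau}$.
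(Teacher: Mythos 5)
Your proposal is correct and follows essentially the same route as the paper's proof: transport $\gamma$ to a weak solution of $\alpha_{\sigma,\tau}$ over $H(t,\sigma)$ via ${\rm{res}}^{L'(t,\tau')/H(t,\sigma)}_{L'/H}$, apply Proposition \ref{prop:fiber1} there (legitimately, since $H(t,\sigma)$ is of finite dimension over its center by Lemma \ref{lemma:easy}), and descend the resulting split problem back to ${\rm{Gal}}(L'/H)$, using $\alpha'_{\sigma,\tau'}=\underline{\alpha}$ to translate geometric solutions. Your $\gamma_{\rm{geo}}$ and $\alpha'_{\rm{geo}}$ are exactly the paper's $\gamma_{\sigma,\tau}$ and $\underline{\alpha}$.
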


\begin{proof}[Preuve]
Comme \eqref{eq:produit_L} vaut, on peut consid\'erer le probl\`eme de plongement fini 
$$\alpha_{\sigma, \tau} : G \rightarrow {\rm{Gal}}(L(t, \tau) / H(t, \sigma))$$ 
sur $H(t, \sigma)$ (voir \eqref{eq:fep_geo}). De plus, comme \eqref{eq:produit_L'} vaut, $L'(t, \tau')/H(t, \sigma)$ est galoisienne \`a groupe de Galois fini et ${\rm{res}}^{L'(t, \tau')/H(t, \sigma)}_{L'/H}$ est un isomorphisme bien d\'efini (voir corollaire \ref{prop:res_sigma}). Ainsi
$$\gamma_{\sigma, \tau} = \gamma \circ {\rm{res}}^{L'(t, \tau')/H(t, \sigma)}_{L'/H} : {\rm{Gal}}(L'(t, \tau')/H(t, \sigma)) \rightarrow G$$
est une solution faible \`a $\alpha_{\sigma, \tau}$. Puisque $H(t, \sigma)$ est de dimension finie sur son centre (voir lemme \ref{lemma:easy}), la proposition \ref{prop:fiber1} s'applique et fournit un probl\`eme de plongement fini scind\'e $$\underline{\alpha}: G' \rightarrow {\rm{Gal}}(L'(t, \tau') / H(t, \sigma))$$ 
sur $H(t, \sigma)$ qui v\'erifie le 1) et le 2) de cette derni\`ere proposition. On peut alors consid\'erer le probl\`eme de plongement fini
$$\alpha' = {\rm{res}}^{L'(t, \tau')/H(t, \sigma)}_{L'/H} \circ \underline{\alpha} : G' \rightarrow {\rm{Gal}}(L'/H)$$
sur $H$. Comme $\underline{\alpha}$ est scind\'e, il en est de m\^eme pour $\alpha'$ et on a ${\rm{ker}}(\alpha') = {\rm{ker}}(\underline{\alpha}) \cong {\rm{ker}}(\alpha_{\sigma, \tau}) = {\rm{ker}}(\alpha)$. Maintenant, soit $\beta' : {\rm{Gal}}(E'/H(t, \sigma)) \rightarrow G'$ une solution $(\sigma, \tau')$-g\'eom\'etrique \`a $\alpha'$. Alors $\beta'$ est une solution \`a $\alpha'_{\sigma, \tau'} = \underline{\alpha}$. Comme $\underline{\alpha}$ v\'erifie le 2) de la proposition \ref{prop:fiber1}, il existe une solution $\beta : {\rm{Gal}}(E/H(t, \sigma)) \rightarrow G$ \`a $\alpha_{\sigma, \tau}$ v\'erifiant $E \subseteq E'$. Cela conclut la d\'emonstration puisque, par d\'efinition, $\beta$ est une solution $(\sigma, \tau)$-g\'eom\'etrique \`a $\alpha$.
\end{proof}

\subsection{Extension du th\'eor\`eme \ref{thm:3}} \label{ssec:fiber_2}

Notre dernier objectif est la variante du th\'eor\`eme \ref{thm:3} suivante, qui concerne les probl\`emes de plongement finis admettant une solution faible :

\begin{coro} \label{thm:4}
Soient $H$ un corps de dimension finie sur son centre $h$, $\sigma \in {\rm{Aut}}(H)$, $\alpha : G \rightarrow {\rm{Gal}}(L/H)$ un probl\`eme de plongement fini sur $H$ et $\tau \in {\rm{Aut}}(L)$ \'etendant $\sigma$ tels que

\vspace{0.5mm}

\noindent
{\rm{1)}} $\alpha$ admet une solution faible ${\rm{Gal}}(L'/H) \rightarrow G$ et il existe un automorphisme $\tau'$ de $L'$ d'ordre fini \'etendant $\tau$ et tel que $\langle \widetilde{\tau'}, {\rm{Gal}}(\ell'/h) \rangle \cong \langle \widetilde{\tau'} \rangle \times {\rm{Gal}}(\ell'/h)$, o\`u $\widetilde{\tau'}$ est la restriction de $\tau'$ au centre $\ell'$ de $L'$,

\vspace{0.5mm}

\noindent
{\rm{2)}} $\langle \widetilde{\tau}, {\rm{Gal}}(\ell/h) \rangle \cong \langle \widetilde{\tau} \rangle \times {\rm{Gal}}(\ell/h)$, o\`u $\widetilde{\tau}$ est la restriction de $\tau$ au centre $\ell$ de $L$,

\vspace{0.5mm}

\noindent
{\rm{3)}} $h^{\langle \widetilde{\sigma} \rangle}$ est un corps ample, o\`u $\widetilde{\sigma}$ est la restriction de $\sigma$ \`a $h$.

\vspace{0.5mm}

\noindent
Alors $\alpha$ a une solution $(\sigma, \tau)$-g\'eom\'etrique.
\end{coro}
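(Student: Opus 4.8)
The plan is to chain together the weak$\rightarrow$scindé reduction of Proposition \ref{prop:fiber2} with Théorème \ref{thm:3}, exactly in the spirit of the commutative case. First I would apply Proposition \ref{prop:fiber2}: its hypotheses are precisely the data of the corollary, since the product condition \eqref{eq:produit_L} on $\tau$ is assumption 2), the weak solution $\gamma : {\rm{Gal}}(L'/H) \rightarrow G$ to $\alpha$ is furnished by assumption 1), and the finite-order extension $\tau'$ of $\tau$ to $L'$ satisfying \eqref{eq:produit_L'} is also provided by assumption 1). This yields a \emph{split} finite embedding problem $\alpha' : G' \rightarrow {\rm{Gal}}(L'/H)$ over $H$ with ${\rm{ker}}(\alpha) \cong {\rm{ker}}(\alpha')$ and with the property that every $(\sigma, \tau')$-geometric solution to $\alpha'$ produces a $(\sigma, \tau)$-geometric solution to $\alpha$.

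Next I would apply Théorème \ref{thm:3} to the split problem $\alpha'$, equipped with the automorphisms $\sigma$ of $H$ and $\tau'$ of $L'$ (note that $\tau'$ extends $\sigma$, since it extends $\tau$, which extends $\sigma$). Condition 1) of that theorem holds because $\alpha'$ is split, and condition 2) is exactly the product condition \eqref{eq:produit_L'} for $\tau'$, i.e. assumption 1). The only point needing attention is condition 3), which I would satisfy by the simple choice $k_0 = h^{\langle \widetilde{\sigma} \rangle}$ and $\ell_0 = {\ell'}^{\langle \widetilde{\tau'} \rangle}$. Here $k_0$ is ample by assumption 3); the linear disjointness of $\ell_0$ and $h^{\langle \widetilde{\sigma} \rangle}$ over $k_0 = h^{\langle \widetilde{\sigma} \rangle}$ is automatic; and $\ell_0 h^{\langle \widetilde{\sigma} \rangle} = {\ell'}^{\langle \widetilde{\tau'} \rangle}$ holds since $h^{\langle \widetilde{\sigma} \rangle} \subseteq {\ell'}^{\langle \widetilde{\tau'} \rangle}$.

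The one non-formal ingredient is the Galois property of $\ell_0 = {\ell'}^{\langle \widetilde{\tau'} \rangle}$ over $k_0 = h^{\langle \widetilde{\sigma} \rangle}$, which I would obtain from Lemme \ref{triv_2} applied to the pair $(\sigma, \tau')$: since $\tau'$ is a finite-order automorphism of $L'$ extending $\sigma$ and the product condition \eqref{eq:produit_L'} (condition i) of that lemma) holds by assumption 1), condition ii) gives both that the order of $\widetilde{\tau'}$ equals that of $\widetilde{\sigma}$ and that ${\ell'}^{\langle \widetilde{\tau'} \rangle}/h^{\langle \widetilde{\sigma} \rangle}$ is Galois. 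All three hypotheses of Théorème \ref{thm:3} are thus met, so it produces a $(\sigma, \tau')$-geometric solution ${\rm{Gal}}(E'/H(t, \sigma)) \rightarrow G'$ to $\alpha'$. Feeding this into condition 2) of Proposition \ref{prop:fiber2} returns the desired $(\sigma, \tau)$-geometric solution to $\alpha$.

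Since the argument is a pure assembly of two previously established results, I expect no genuine obstacle; the only step requiring a moment's thought is recognizing that taking $k_0 = h^{\langle \widetilde{\sigma} \rangle}$ itself trivializes the linear-disjointness and compositum requirements of condition 3) of Théorème \ref{thm:3}, so that the Galois property supplied by Lemme \ref{triv_2} is all that remains to be checked.
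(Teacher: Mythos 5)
Your proposal is correct and follows essentially the same route as the paper: apply Proposition \ref{prop:fiber2} (via hypotheses 1) and 2)) to reduce to a split problem $\alpha'$ over $L'/H$, solve $\alpha'$ $(\sigma,\tau')$-geometrically with Th\'eor\`eme \ref{thm:3}, and push the solution back through point 2) of Proposition \ref{prop:fiber2}. The paper leaves the verification of condition 3) of Th\'eor\`eme \ref{thm:3} implicit, whereas you spell it out with the choice $k_0 = h^{\langle \widetilde{\sigma} \rangle}$, $\ell_0 = {\ell'}^{\langle \widetilde{\tau'} \rangle}$ and Lemme \ref{triv_2}; this is exactly the intended check.
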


\begin{proof}[Preuve]
Comme 1) et 2) sont vraies, la proposition \ref{prop:fiber2} s'applique et fournit un probl\`eme de plongement fini scind\'e $\alpha' : G' \rightarrow {\rm{Gal}}(L'/H)$ sur $H$ v\'erifiant le 1) et le 2) de cette derni\`ere proposition. Maintenant, comme 1) et 3) sont vraies et comme $\alpha'$ est scind\'e, $\alpha'$ a une solution $(\sigma, \tau')$-g\'eom\'etrique (voir th\'eor\`eme \ref{thm:3}). Il ne reste alors plus qu'\`a utiliser que le probl\`eme de plongement fini $\alpha'$ v\'erifie le 2) de la proposition \ref{prop:fiber2} pour achever la d\'emonstration.
\end{proof}

En particulier, si $H$ est un corps de dimension finie sur son centre $h$ et si $\alpha : G \rightarrow {\rm{Gal}}(L/H)$ est un probl\`eme de plongement fini sur $H$ admettant une solution faible, alors $\alpha$ a une solution g\'eom\'etrique, si $h$ est ample, comme annonc\'e \`a la fin de l'introduction.

\subsection{Remarque finale} \label{ssec:fiber_3}

Soit $\alpha : G \rightarrow {\rm{Gal}}(L/H)$ un probl\`eme de plongement fini sur un corps $H$ de dimension finie sur son centre $h$. Par le lemme \ref{lemma:DL}, $\alpha$ a une solution g\'eom\'etrique si $\check{\alpha}$ a une solution g\'eom\'etrique ${\rm{Gal}}(e/h(t)) \rightarrow G$ avec $e \subseteq \ell((t))$, o\`u $\ell$ est le centre de $L$.

La r\'eciproque est fausse en g\'en\'eral. En effet, supposons les conditions suivantes v\'erifi\'ees :

\noindent
{\rm{1)}} $h$ est ample, 

\noindent
{\rm{2)}} $\alpha$ poss\`ede une solution faible,

\noindent
{\rm{3)}} $\alpha$ n'est pas scind\'e.

\noindent
Comme 1) et 2) sont vraies, $\alpha$ a une solution g\'eom\'etrique (voir corollaire \ref{thm:4}). Cependant, supposons que $\check{\alpha}$ poss\`ede une solution g\'eom\'etrique $\beta : {\rm{Gal}}(e/h(t)) \rightarrow G$ v\'erifiant $e \subseteq \ell((t))$. Alors 0 n'est pas un point de branchement\footnote{Pour une extension galoisienne finie $e/k(t)$ de corps commutatifs, on dit que $t_0 \in \overline{k}$ est un {\it{point de branchement}} de $e/k(t)$ si $\langle t-t_0 \rangle$ est ramifi\'e dans la cl\^oture int\'egrale de $\overline{k}[t]$ dans $e\overline{k}$.} de $e/\ell(t)$. Puisque 0 n'est pas non plus un point de branchement de $\ell(t)/h(t)$, on en d\'eduit que 0 n'est pas un point de branchement de $e/h(t)$. De plus, le corps r\'esiduel de $e$ en n'importe quel id\'eal maximal $\mathfrak{P}$ contenant $t$ vaut $\ell$. Si $D_\mathfrak{P}$ d\'esigne le groupe de d\'ecomposition de $e/h(t)$ en l'id\'eal maximal $\mathfrak{P}$, on a un isomorphisme $\varphi_0 : D_\mathfrak{P} \rightarrow {\rm{Gal}}(\ell/h)$ d\'efini comme suit. Soit $B$ la cl\^oture int\'egrale de $h[t]$ dans $e$ et soit $\mathfrak{P}$ un id\'eal maximal de $B$ contenant $t$. La r\'eduction modulo $\mathfrak{P}$ de n'importe quel \'el\'ement $x$ de $B$ est not\'ee $\overline{x}$. On a donc $B/\mathfrak{P}= \ell$ et, pour $\sigma \in D_\mathfrak{P}$ et $x \in B$, on pose $\varphi_0(\sigma)(\overline{x}) = \overline{\sigma(x)}$. On v\'erifie alors facilement que l'on a $\check{\alpha} \circ \beta \circ \varphi_{0}^{-1}={\rm{id}}_{{\rm{Gal}}(\ell/h)}$, ce qui contredit 3).

Pour conclure, nous donnons un exemple de probl\`eme de plongement fini comme ci-dessus. Consid\'erons le groupe quaternionique $Q_8$, muni de la pr\'esentation $\langle i, j \, \mid \, i^4=1, i^2=j^2, jij^{-1} = i^{-1} \rangle$, et le probl\`eme de plongement fini $\alpha : Q_8 \rightarrow {\rm{Gal}}(\Qq((t))(\sqrt{2})/\Qq((t)))$ sur le corps ample $\Qq((t))$, d\'efini par $\alpha(i)(\sqrt{2}) =- \sqrt{2}$ et $\alpha(j)(\sqrt{2}) = \sqrt{2}$. Puisque $Q_8$ ne peut s'\'ecrire sous la forme $H_1 \rtimes H_2$, o\`u $H_1$ et $H_2$ sont des sous-groupes propres et non triviaux de $Q_8$, le probl\`eme de plongement fini $\alpha$ n'est pas scind\'e. Cependant, $\alpha$ a une solution faible. En effet, d'apr\`es \cite[Theorem 1.2.1]{Ser92}, $\Qq((t))(\sqrt{2+\sqrt{2}})/\Qq((t))$ est galoisienne de groupe $\Zz/4\Zz$ et on a $\Qq((t))(\sqrt{2}) \subseteq  \Qq((t))(\sqrt{2+\sqrt{2}})$. Si $\sigma$ est un g\'en\'erateur de ${\rm{Gal}}(\Qq((t))(\sqrt{2+\sqrt{2}})/\Qq((t)))$, on consid\`ere l'isomorphisme 
$$\beta : {\rm{Gal}}(\Qq((t))(\sqrt{2+\sqrt{2}})/\Qq((t))) \rightarrow \langle i \rangle$$ 
d\'efini par $\sigma \mapsto i$. Alors $\beta$ est une solution faible \`a $\alpha$. Enfin, puisque $\Qq((t))(\sqrt{2+\sqrt{2}})$ est de niveau infini, $\hat{\beta} : {\rm{Gal}}(H_{\Qq((t))(\sqrt{2+\sqrt{2}})}/H_{\Qq((t))}) \rightarrow Q_8$ est une solution faible au probl\`eme de plongement fini $\hat{\alpha} : Q_8 \rightarrow {\rm{Gal}}(H_{\Qq((t))(\sqrt{2})}/H_{\Qq((t))})$ sur le corps des quaternions $H_{\Qq((t))}$ \`a coefficients dans $\Qq((t))$ (voir remarque \ref{rk:bruno} pour la d\'efinition), mais $\hat{\alpha}$ n'est pas scind\'e.

\bibliography{Biblio2}

\begin{thebibliography}{D{\`e}b01b}

\bibitem[ALP20]{ALP20}
Gil Alon, Fran\c{c}ois Legrand, and Elad Paran.
\newblock Galois groups over rational function fields over skew fields.
\newblock {\em C. R. Math. Acad. Sci. Paris}, 358(7):785--790, 2020.

\bibitem[Beh21]{Beh21}
Angelot Behajaina.
\newblock {T}h\'{e}orie inverse de {G}alois sur les corps de fractions
  rationnelles tordus. ({F}rench).
\newblock {\em J. Pure Appl. Algebra}, 225(4), 2021.
\newblock Article 106549, 10 pp.

\bibitem[Bla72]{Bla72}
Andr\'e Blanchard.
\newblock {\em Les corps non commutatifs. ({F}rench)}.
\newblock Collection {S}up : {L}e {M}ath\'ematicien, {N}o. 9. Presses
  {U}niversitaires de {F}rance, {V}end\^ome, 1972.
\newblock 135 pp.

\bibitem[Bou12]{Bou12}
Nicolas Bourbaki.
\newblock {\em \'{E}l\'ements de math\'ematique. {A}lg\`ebre. {C}hapitre 8.
  {M}odules et anneaux semi-simples. ({F}rench)}.
\newblock Springer, Berlin, 2012.
\newblock x+489 pp. Second revised version of the 1958 edition.

\bibitem[BSF13]{BSF13}
L.~Bary-Soroker and A.~Fehm.
\newblock Open problems in the theory of ample fields.
\newblock In {\em Geometric and {dif-} ferential {G}alois theories}, volume~27
  of {\em S\'emin. Congr.}, pages 1--11. Soc. Math. France, Paris, 2013.

\bibitem[Coh95]{Coh95}
Paul~Moritz Cohn.
\newblock {\em Skew fields. {T}heory of general division rings}.
\newblock Encyclopedia of {M}athematics and its {A}pplications, 57. Cambridge
  {U}niversity {P}ress, Cambridge, 1995.
\newblock xvi+500 pp.

\bibitem[DD97]{DD97b}
Pierre D{\`e}bes and Bruno Deschamps.
\newblock The regular inverse {G}alois problem over large fields.
\newblock In {\em Geometric {G}alois actions, 2}, volume 243 of {\em London
  Math. Soc. Lecture Note Ser.}, pages 119--138. Cambridge Univ. Press,
  Cambridge, 1997.

\bibitem[D{\`e}b01a]{Deb01a}
Pierre D{\`e}bes.
\newblock M\'ethodes topologiques et analytiques en th\'eorie inverse de
  {G}alois : th\'eor\`eme d'exis\-ten\-ce de {R}iemann. ({F}rench).
\newblock In {\em Arithm\'etique de rev\^etements alg\'ebriques
  ({S}aint-{\'E}tienne, 2000)}, volume~5 of {\em S\'emin. Congr.}, pages
  27--41. Soc. Math. France, Paris, 2001.

\bibitem[D{\`e}b01b]{Deb01b}
Pierre D{\`e}bes.
\newblock Th\'eorie de {G}alois et g\'eom\'etrie : une introduction.
  ({F}rench).
\newblock In {\em Arithm\'etique de re\-v\^e\-te\-ments alg\'ebriques
  ({S}aint-{\'E}tienne, 2000)}, volume~5 of {\em S\'emin. Congr.}, pages 1--26.
  Soc. Math. France, Paris, 2001.

\bibitem[Des12]{Des12}
Bruno Deschamps.
\newblock {\em Une introduction au groupe de {B}rauer. (French)}.
\newblock {L}ecture notes, 2012.
\newblock {A}vailable at
  \url{http://perso.univ-lemans.fr/~bdesch/BrauerCaen.pdf}.

\bibitem[Des15]{Des15}
Bruno Deschamps.
\newblock Minimalit\'e et abyssalit\'e des extensions ab\'eliennes et
  projectives de $\mathbb{Q}$. ({F}rench).
\newblock {\em J. Algebra}, 441:1--20, 2015.

\bibitem[DL20]{DL20}
B.~Deschamps and F.~Legrand.
\newblock Le probl\`eme inverse de {G}alois sur les corps des fractions tordus
  \`a ind\'etermin\'ee centrale. ({F}rench).
\newblock {\em J. Pure Appl. Algebra}, 224(5), 2020.
\newblock Article 106240, 13 pp.

\bibitem[FJ08]{FJ08}
Michael~D. Fried and Moshe Jarden.
\newblock {\em Field arithmetic}.
\newblock Ergebnisse der Mathematik und ihrer Grenz\-gebiete. 3. Folge. A
  {S}eries of Modern Surveys in Mathematics [Results in Mathematics and Related
  Areas. 3rd Series. A Series of Modern Surveys in Mathematics], 11.
  Springer-Verlag, Berlin, third edition, 2008.
\newblock Revised by Jarden. xxiv+792 pp.

\bibitem[GW04]{GW04}
Kenneth~R. Goodearl and Robert~Breckenridge {Warfield, Jr.}
\newblock {\em An Introduction to noncommutative Noetherian rings}.
\newblock London Mathematical Society Student Texts, 61. Cambridge University
  Press, Cambridge, 2004.
\newblock Second edition. xxiv+344 pp.

\bibitem[HJ98]{HJ98b}
Dan Haran and Moshe Jarden.
\newblock Regular split embeddings problems over function fields of one
  variable over ample fields.
\newblock {\em J. Algebra}, 208(1):147--164, 1998.

\bibitem[HS05]{HS05}
David Harbater and Katherine~F. Stevenson.
\newblock Local {G}alois theory in dimension two.
\newblock {\em Adv. Math.}, 198(2):623--653, 2005.

\bibitem[Jac64]{Jac64}
Nathan Jacobson.
\newblock {\em Structure of rings}, volume~37 of {\em American Mathematical
  Society Colloquium Publications}.
\newblock American Mathematical Society, Providence, R.I., 1964.
\newblock Revised edition. ix+299 pp.

\bibitem[Jar11]{Jar11}
Moshe Jarden.
\newblock {\em Algebraic patching}.
\newblock Springer {M}onographs in {M}athematics. Springer, Heidelberg, 2011.
\newblock xxiv+290 pp.

\bibitem[Lam05]{Lam05}
Tsit~Yuen Lam.
\newblock {\em Introduction to quadratic forms over fields}.
\newblock Graduate Studies in Mathematics, 67. American Mathematical Society,
  Providence, RI, 2005.
\newblock xxii+550 pp.

\bibitem[MM18]{MM18}
Gunter Malle and B.~Heinrich Matzat.
\newblock {\em {I}nverse {G}alois theory}.
\newblock {S}pringer {M}onographs in {M}athema\-tics. Springer, Berlin, 2018.
\newblock Second edition. xvii+532 pp.

\bibitem[Ore33]{Ore33}
Oystein Ore.
\newblock Theory of non-commutative polynomials.
\newblock {\em Ann. of Math. (2)}, 34(3):480--508, 1933.

\bibitem[Par09]{Par09}
Elad Paran.
\newblock Split embedding problems over complete domains.
\newblock {\em Ann. of Math. (2)}, 170(2):899--914, 2009.

\bibitem[Pop96]{Pop96}
Florian Pop.
\newblock Embedding problems over large fields.
\newblock {\em Ann. of Math. (2)}, 144(1):1--34, 1996.

\bibitem[Pop14]{Pop14}
Florian Pop.
\newblock Little survey on large fields - old $\&$ new.
\newblock In {\em Valuation theory in interaction}, EMS Ser. Congr. Rep., pages
  432--463. Eur. Math. Soc., Z\"urich, 2014.

\bibitem[Ser92]{Ser92}
Jean-Pierre Serre.
\newblock {\em Topics in Galois Theory}, volume~1 of {\em Research Notes in
  Mathematics}.
\newblock Jones and Bartlett Publishers, Boston, MA, 1992.
\newblock Lecture notes prepared by {H}enri {D}armon [{H}enri {D}armon]. With a
  foreword by {D}armon and the author. xvi+117 pp.

\bibitem[V{\"o}l96]{Vol96}
Helmut V{\"o}lklein.
\newblock {\em Groups as {G}alois groups. An introduction}, volume~53 of {\em
  Cambridge Studies in Advanced Mathematics}.
\newblock Cambridge University Press, Cambridge, 1996.
\newblock xviii+248 pp.

\end{thebibliography}
\bibliographystyle{alpha}

\end{document}